\newenvironment{proof}[1][Proof:]{\begin{trivlist} 
\item[\hskip \labelsep {\bfseries #1}]}{\end{trivlist}} 
\newcommand{\qed}{\nobreak \ifvmode \relax \else \ifdim\lastskip<1.5em \hskip-\lastskip \hskip1.5em plus0em minus0.5em \fi \nobreak \vrule height0.75em width0.5em depth0.25em\fi} 
\def\0{\bf \0}
\def\A{{\bf A}}
\def\I{{\bf I}}
\def\M{{\bf M}}
\def\0{{\bf 0}}
\def\R{\mathbb{R}}
\def\S{{\bf S}}
\def\T{{\bf T}}
\def\Z{{\bf Z}}
\def\a{{\bf a}}
\def\b{{\bf b}}
\def\c{{\bf c}}
\def\s{{\bf s}}
\def\x{{\bf x}}
\def\y{{\bf y}}
\def\Tr{{\rm T}}
\def\T{{\rm T}}
\newtheorem{algorithm}{Algorithm}[section]
\newtheorem{theorem}{Theorem}[section]
\newtheorem{lemma}{Lemma}[section]
\newtheorem{corollary}{Corollary}[section]
\newtheorem{remark}{Remark}[section]
\begin{document}
\title{A double-pivot simplex algorithm and its upper 
bounds of the iteration numbers}
\author{
Yaguang Yang\thanks{US NRC, Office of Research, 
11555 Rockville Pike, Rockville, 20850. 
Email: yaguang.yang@verizon.net.} 
}

\date{\today}

\maketitle    % This command generates the title.

\begin{abstract}

In this paper, a double-pivot simplex method is proposed.
Two upper bounds of iteration numbers are derived. 
Applying one of the bounds to some special linear 
programming (LP) problems, such as LP with a totally
unimodular matrix and Markov Decision Problem (MDP)
with a fixed discount rate, indicates that the double-pivot 
simplex method solves these problems in a strongly 
polynomial time. Applying the other bound to a variant of 
Klee-Minty cube shows that this bound
is actually attainable. Numerical test on three variants of 
Klee-Minty cubes is performed for the problems with 
sizes as big as $200$ constraints and $400$ variables.
The test result shows that the proposed  algorithm 
performs extremely good for all three variants. 
Dantzig's simplex method cannot handle the Klee-Minty 
cube problems with $200$ constraints because it needs
about $2^{200} \approx 10^{60}$ iterations. 
Numerical test is also performed for randomly 
generated problems for both the proposed and
Dantzig's simplex methods. This test shows that 
the proposed method is promising for large 
size problems.

\end{abstract}

{\bf Keywords:} Double-pivot algorithm, simplex method, 
linear programming, Klee-Minty cube.

{\bf MSC classification:} 90C05 90C49.
 \newpage
 
\section{Introduction}

Since Dantzig invented the simplex method in 1940s 
\cite{dantzig49}, its complexity has been a topic attracted
many researchers. Since all pivot rules of the
simplex method search the optimizer among 
vertices which are defined by the linear 
constraints, the iterate moves from one vertex to the next
vertex along an edge of the polytope. Therefore, the
diameter of a polytope, defined as the shortest path or 
the least number of edges between any two vertices of 
the polytope, is the smallest iteration number that 
the best simplex algorithm can possibly achieve. Hirsch 
in 1957 \cite{dantzig63} conjectured that the diameter 
of the polytope is $m-n$ for 
the polytope $P=\{ \x\in \R^n: \A\x \le \b \}$ where 
$\A \in \Z^{m \times n}$ and $m>n$. This conjecture was 
disapproved by Santos \cite{santos12} after a $50$-year 
effort of many experts. Now, some experts, for example 
Santos \cite{santos12a}, believe that the diameter of the convex 
polytope can be bounded by a polynomial of $(m-n)n$. 
This conjectured upper bound for the diameter of the convex 
polytope is much smaller than the best-known quasi-polynomial
upper bounds which are due to Kalai and Kleitman \cite{kalai92}, 
Todd \cite{todd14}, and Sukegawa \cite{sukegawa17}. 
In a recent effort \cite{yang18}, this author showed that
for a given polytope, the diameter is bounded by
$\mathcal{O} \left( \frac{n^3 \Delta}{\det(\A^*)} \right)$,
where $\Delta$ is the largest absolute value among all 
$(n-1) \times (n-1)$ sub-determinants of $\A$ and 
${\det(\A^*)}$ is the smallest absolute value among all 
nonzero $n \times n$ sub-determinants of $\A$.

Finding the diameter of convex polytopes provides only a 
surmised lowest iteration number for which an optimal 
pivot rule may achieve. Finding actually such a pivot 
rule (the way to choose the next neighbor vertex) is also
a difficult problem. Researchers proposed many pivot rules with 
the hope that they may achieve an iteration number 
in the worst case bounded by a polynomial of $m$ and $n$ (see 
\cite{tz93} and references therein). However, since Klee 
and Minty \cite{km72} constructed a cube and showed 
that Dantzig's rule needs an exponential number of iterations 
in the worst case to solve the Klee and Minty cube problem, 
people have showed similar results for almost every popular 
pivot rule \cite{ac78,friedmann11,gs79,jeroslow73,psz08}. 
It is now believed that finding 
a pivot rule that will solve all linear programming 
problems in the worst case in a polynomial time is a very 
difficult problem \cite{smale99}.

Existing pivot rules consider one of many merit criteria 
to select an entering variable. Some popular pivot rules 
are, for example, the most negative 
index in the reduced cost vector (Dantzig's rule), the best
improvement rule, Bland's least index pivoting rule, the
steepest edge simplex rule, Zadeh's rule, among others \cite{tz93}.
Each merit criterion has its own appealing feature. However,
existing simplex algorithms cannot use multiple merits at the 
same time because each of these algorithms 
updates only one variable at a time. In a slightly 
different view, a merit criterion may be a good choice 
in most scenarios but may be a poor choice in some spacial 
case. For example, Dantzig's rule is most efficient for 
general problems \cite{ps14} but performs poorly for the 
Klee-Minty cube \cite{km72}. Therefore, randomized 
pivot rules \cite{ghz98,kp06} that randomly select an entering 
variable from the set of possible entering variables that 
will improve the objective function have been proposed and 
proved to be able to find an optimizer in a polynomial time
on average \cite{kp06}. This shows that using a combination 
of merits in the selection of pivot can be beneficial. 

In this paper, we consider a novel simplex algorithm for linear
programming problem. This algorithm is different from all 
existing simplex algorithms in that it updates two
variables at one iteration. This strategy looks two pivots 
ahead instead of focus only on the next step. We believe 
that this strategy is better than existing pivot rules because 
it looks longer term benefit instead of a short-sighted
one-step achievement. Since the proposed algorithm 
updates two variables at a time, it considers multiple merits 
at the same time in the selection of the next  
vertex in a deterministic way which is different 
from the randomized rules. Numerical test shows that
the proposed algorithm finds the optimal solution in
just one iteration for three variants of Klee-Minty 
problems.  We wish that these features give us some 
hope to find some strong polynomial algorithms to 
solve linear programming problems. We may extend 
the proposed algorithm to select more than two entering 
variables, but there is a trade-off between reducing 
iteration numbers and reducing the cost of each iteration.

After we finished this research, we realized that 
Vitor and Easton \cite{ve18} recently proposed a similar 
idea to update two pivots at a time. Their algorithm
chooses the two entering variables using the most
negative reduced cost criterion rather than using 
a combined merit criteria to select the two entering 
variables. We indicate in Remark \ref{mostNeg} 
that this is not a good strategy. Indeed, 
our numerical test shows that such a choice leads
to an algorithm that needs exponentially many 
iterations to find the solution for Klee-Minty problems
while our proposed algorithm needs just one iteration
for these Klee-Minty problems.

In this paper, we use small letters with bold font for vectors 
and capital letters with bold font for matrices. To save space,
we write the column vector 
$\x =[\x_1^{\Tr}, \x_2^{\Tr}]^{\Tr}$ as $\x=(\x_1, \x_2)$.
The remainder of the paper is organized as follows.
Section 2 describes the proposed algorithm. Section 3
provides two bounds of the iteration numbers of the 
algorithm. Section 4 presents the numerical test results 
for three variants of Klee-Minty cubes and compares the
the performance of the proposed algorithm and Dantzig's
algorithm for randomly generated problems. The 
concluding remarks are in Section 5.

\section{The proposed algorithm}
\label{proposedAlgo}

We consider the primal linear programming problem in
the standard form:
\begin{eqnarray}
\begin{array}{cl}
\min  &  \c^{\Tr}\x, \\ 
\mbox{\rm subject to} 
&  \A\x=\b, \hspace{0.1in} \x \ge \0,
\end{array}
\label{LP}
\end{eqnarray}
where $\A \in {\R}^{m \times n}$, $\b \in {\R}^{m}$, 
$\c \in {\R}^{n}$ are given, and $\x \in {\R}^n$  is the 
vector to be optimized. Associated with the linear 
programming is the dual programming that is also 
presented in the standard form:
\begin{eqnarray}
\begin{array}{cl}
\max   &   \b^{\T}\y, \\
\mbox{\rm subject to}   &  
\A^{\T}\y +\s=\c, \hspace{0.1in} \s \ge \0,
\end{array}
\label{DP}
\end{eqnarray}
where $\y \in {\R}^{m}$ is the dual variable vector, 
and $\s \in {\R}^{n}$ is the dual slack vector. 

A feasible solution of the linear program satisfies
the conditions of $\A \x =\b$ and $\x \ge \0$. 
We will denote by $B \subset \{ 1, 2, \ldots, n \}$ the 
index set with cardinality $| B | = m$ and 
$N = \{ 1, 2, \ldots, n \} \setminus B$ the complementary 
set of $B$ with cardinality $| N |=n-m$ such that 
matrix $\A$ and vector $\x$ can be partitioned as 
$\A=[ \A_B, \A_N]$ and $\x =(\x_B, \x_N)$, moreover the
columns of $\A_B$ are linearly independent and
$\A_B \x_B = \b$, hence $\x_N=\0$. We call this
$\x=(\x_B,\0)$ as the basic feasible solution.
Similarly, we can partition $\c$ and $\s$ according to
the index sets $B$ and $N$ as follows:
\[
\c = \left[ \begin{array}{c}
\c_B \\  \c_N
\end{array} \right], \hspace{0.2in}
\s = \left[ \begin{array}{c}
\s_B \\  \s_N
\end{array} \right]. 
\]
We denote by $\mathcal{B}$ the 
set of all bases $B$. In the discussion below,
we make the following assumptions:
\begin{itemize}
\item[1.] $rank(\A) = m$.
\item[2.] The primal problem (\ref{LP}) has an optimal solution.
\item[3.] Initial basic feasible solution $\x^0$ is given 
and is not an optimizer.
\item[4.] All basic feasible solutions are bounded above and below,
more specifically, for all $i \in B \subset \mathcal{B}$, 
$ \delta \le x_i \le \gamma$.
\end{itemize}
The first three assumptions are standard. The last assumption 
implies that the primal problem (\ref{LP}) is non-degenerate.
Using the $B-N$ partition, we can rewrite the primal problem as
\begin{eqnarray}
\begin{array}{cl}
\min &  \c_B^{\T}\x_B + \c_N^{\T}\x_N,  \\
\mbox{\rm subject to} 
&  \A_B\x_B + \A_N\x_N=\b, 
\hspace{0.1in} \x_B \ge \0, \hspace{0.1in}  \x_N \ge \0.
\end{array}
\label{LP1}
\end{eqnarray}
Since $\A_B$ is non-singular, we can rewrite (\ref{LP1}) as
\begin{eqnarray}
\begin{array}{cl}
\min & \c_B^{\T}\A_B^{-1} \b 
+ (\c_N - \A_N^{\T} \A_B^{-\Tr} \c_B)^{\Tr} \x_N, \\
\mbox{\rm subject to} 
& \x_B = \A_B^{-1} \b - \A_B^{-1}\A_N\x_N, 
\hspace{0.1in} \x_B \ge \0, \hspace{0.1in}  \x_N \ge \0.
\end{array}
\label{LP2}
\end{eqnarray}
Let superscript $k$ represent the $k$th iteration,
the matrices and vectors in the $k$th iteration are then
denoted by $\A_{B^k}$, $\A_{N^k}$, $\c_{B^k}$, $\c_{N^k}$,
$\x_{B^k}$, $\x_{N^k}$, $\s_{B^k}$, and $\s_{N^k}$, where
$\x^k=(\x_{B^k},\x_{N^k})$ is the basic feasible 
solution of (\ref{LP}) with $\x_{B^k} > \0$ and
$\x_{N^k}= \0$. Similarly, we denote by 
$\x^*= (\x_{B^*}, \x_{N^*})$ the 
optimal basic solution of (\ref{LP}) with 
$\x_{B^*}=\A_{B^*}^{-1} \b > \0$ and
$\x_{N^*}=\0$, by $(\y^*, \s^*)$ the optimal basic 
solution of the dual problem (\ref{DP}) with
$\y^*=\A_{B^*}^{-\Tr} \c_{B^*}$, $\s_{B^*}=\0$, and 
$\s_{N^*}=\c_{N^*}-\A_{N^*}^{\Tr}\A_{B^*}^{-\Tr}\c_{B^*} \ge \0$, 
by $z^* = \c^{\Tr} \x^* = \b^{\T}\y^*$ the optimal value.
It is worthwhile to note that the 
partition of $(B^k,N^k)$ keeps updating and it is different 
from the partition $(B^*,N^*)$ before an 
optimizer is found. Let 
\begin{equation}
\bar{\c}_{N^k}^{\Tr} =( \c_{N^k} - \A_{N^k}^{\Tr} 
\A_{B^k}^{-\Tr} \c_{B^k} )^{\Tr}
\label{cNbar}
\end{equation}
be the reduced cost vector. Clearly, if $\bar{\c}_{N^k} \ge \0$, 
an optimizer is found; if $\bar{c}_{j^k} <0$ for some 
$j^k \in N^k$, then an entering variable $x_{\jmath^k}$ 
in the next vertex is chosen from the 
set of $\{ j^k ~|~ \bar{c}_{j^k} <0 \}$ because 
by increasing $x_{\jmath^k}$, the objective function 
$\c^{\Tr} \x=\c_{B^k}^{\T}\x_{B^k}+\bar{c}_{\jmath^k}x_{\jmath^k}$ 
will be reduced. Many different rules have been proposed
for the selection of the entering variable $x_{\jmath^k}$
under the constraint:
\begin{equation}
\jmath^k \in \{ j^k ~|~ \bar{c}_{j^k} <0 \}.
\end{equation}
Once the entering variable is selected, 
existing pivot rules determine the leaving variable using the 
following method: Denote $\bar{\b} = \A_{B^k}^{-1} \b$
and $\bar{\a}_{\jmath^k} = \A_{B^k}^{-1} \A_{\jmath^k}$,
$\jmath^k \in N^k$ is the index of the entering
variable, the leaving variable $x_{\imath}$, 
${\imath} \in B^k$, is determined by the following condition.
\begin{eqnarray}
x_{\imath} = \min_{i \in \{ 1,\ldots,m \}} 
\bar{b}_i / \bar{a}_{\jmath^k,i},   
\hspace{0.1in}  \mbox{subject to}  
\hspace{0.1in} \bar{a}_{\jmath^k,i}>0.
\label{outVar}
\end{eqnarray}
The corresponding step-size is given by
\begin{eqnarray}
\min_{i \in  \{ 1,\ldots,m \}} 
\bar{b}_i / \bar{a}_{\jmath^k,i},  
\hspace{0.1in} \mbox{subject to}
\hspace{0.1in}  \bar{a}_{\jmath^k,i}>0  .
\label{stepsize}
\end{eqnarray}

As we pointed out above, our strategy is to select,
in a deterministic way, two entering variables from 
the set of non-basic variables that will reduce the 
objective function. According to some
extensive computational experience, for example 
\cite{ps14}, Dantzig's rule is the most efficient 
on average among all popular pivot rules (even though 
Dantzig's rule needs exponentially many pivots to find the 
optimal solution for Klee-Minty cubes in the worst case), 
therefore, we select the first entering variable 
$x_{\jmath_1^k}$ using Dantzig's rule: 
\begin{equation}
{\jmath_1^k} :=\{ {\jmath_1^k} ~|~ \bar{c}_{\jmath_1^k} 
= \min_{j^k \in N^k} \bar{\c}_{j^k}  \}.
\label{enteringVar}
\end{equation} 
Kitahara and Mizuno \cite{km13} showed that the 
number of iterations in existing pivot rules is significantly 
affected by the minimum values of all the positive 
elements of primal basic feasible solutions. Carefully
studying Klee-Minty cube and its variants
\cite{greenberg97,km11,ibrahima13} indicates that the other 
entering variable should be determined by taking the
variable among all $j^k \in N^k$ with $\bar{c}_{j^k}<0$
such that a particular $\jmath_2^k$ will maximize the 
step-size defined in (\ref{stepsize}), i.e.,
\begin{equation}
x_{\jmath_2^k} = \max_{\bar{\c}_{j^k}<0}
\Big\{
\min_{i \in  \{ 1,\ldots,m \}} \bar{b}_i / \bar{a}_{j^k,i},
\hspace{0.1in} \mbox{subject to}
\hspace{0.1in}    \bar{a}_{j^k,i}>0
\Big\}.
\label{iniVar2}
\end{equation}
This strategy will be justified again in the proof of Theorem
\ref{thm2} and in the discussion of Remark \ref{longestStep}.
If ${\jmath_2^k}={\jmath_1^k}$ (which means that the most 
negative rule will generate the longest step), then we take the second
entering variable $x_{\jmath_2^k}$ which has the second 
largest step-size.

Now we discuss how to choose the leaving variables. 
To make our notation simple, we drop the 
iteration index $k$ if it does not cause confusion.
Let $\bar{\A}_{(\jmath_1,\jmath_2)} = 
\A_B^{-1} \A_{(\jmath_1,\jmath_2)}$
where $\A_{(\jmath_1,\jmath_2)}$ is composed of the 
$\jmath_1$ and $\jmath_2$ columns of $\A_N$,
and $\bar{\c}_{(\jmath_1,\jmath_2)} < \0$ be the 
two corresponding elements in $\bar{\c}_N$. For the
two entering indexes $(\jmath_1,\jmath_2) \in N^k$ such that that 
$\x_{(\jmath_1,\jmath_2)} =(x_{\jmath_1},x_{\jmath_2}) \ge \0$, 
we need
\begin{equation}
\x_{B^k}^{k+1}=\A_{B^k}^{-1}\b
-\A_{B^k}^{-1}\A_{N^k}\x_{N^k} 
= \bar{\b} - \bar{\A}_{(\jmath_1,\jmath_2)}
\x_{(\jmath_1,\jmath_2)} \ge \0.
\label{updatedXB}
\end{equation}
Therefore, the problem of finding a good new vertex is 
reduced to minimize the following linear programming problem.
\begin{eqnarray}
\begin{array}{cl}
\min  &  \bar{\c}_{(\jmath_1,\jmath_2)}^{\T}
\x_{(\jmath_1,\jmath_2)}, \\ 
\mbox{\rm subject to} 
&  \bar{\A}_{(\jmath_1,\jmath_2)}
\x_{(\jmath_1,\jmath_2)} \le \bar{\b}, 
\hspace{0.1in} \x_{(\jmath_1,\jmath_2)} \ge \0.
\end{array}
\label{twoDim}
\end{eqnarray}
Here the third merit criterion is introduced, which is to
determine the values of the two entering variables to minimize 
the objective function under the constraints of (\ref{twoDim}). 
\begin{lemma}
Let $z^k$ be the value of the objective function of 
(\ref{LP}) at iteration $k$. Then, 
the optimum of the problem (\ref{twoDim})
at iteration $k+1$ gives the minimal $z^{k+1}$
when the most negative and the longest step-size rules
are used for two entering variables, and 
\[
z^{k+1} - z^k = \c^{\Tr} \x^{k+1} - \c^{\Tr} \x^k.
\]
\label{firstLemma}
\end{lemma}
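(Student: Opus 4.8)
The plan is to reduce the assertion to the objective reparametrization already recorded in~(\ref{LP2}) and~(\ref{cNbar}). Since $z^k=\c^{\Tr}\x^k$ and $\x_{N^k}=\0$, the first line of~(\ref{LP2}) gives $z^k=\c_{B^k}^{\Tr}\A_{B^k}^{-1}\b=\c_{B^k}^{\Tr}\bar{\b}$. Now consider any candidate successor $\x^{k+1}$ obtained by keeping every non-basic coordinate in $N^k\setminus\{\jmath_1,\jmath_2\}$ at zero and letting only $x_{\jmath_1}$ and $x_{\jmath_2}$ become positive, and write $\x_{(\jmath_1,\jmath_2)}=(x_{\jmath_1},x_{\jmath_2})$. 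Substituting this $\x^{k+1}$ into~(\ref{LP2}) and invoking the definition~(\ref{cNbar}), the only surviving entries of $\bar{\c}_{N^k}^{\Tr}\x_{N^k}$ are the two indexed by $\jmath_1$ and $\jmath_2$, so
\[
\c^{\Tr}\x^{k+1}=\c_{B^k}^{\Tr}\bar{\b}+\bar{\c}_{(\jmath_1,\jmath_2)}^{\Tr}\x_{(\jmath_1,\jmath_2)}=z^k+\bar{\c}_{(\jmath_1,\jmath_2)}^{\Tr}\x_{(\jmath_1,\jmath_2)}.
\]
This is exactly the displayed identity $z^{k+1}-z^k=\c^{\Tr}\x^{k+1}-\c^{\Tr}\x^k$, both sides being equal to $\bar{\c}_{(\jmath_1,\jmath_2)}^{\Tr}\x_{(\jmath_1,\jmath_2)}$, and it shows that minimizing $z^{k+1}$ over this family of successors is the same as minimizing the linear form $\bar{\c}_{(\jmath_1,\jmath_2)}^{\Tr}\x_{(\jmath_1,\jmath_2)}$.

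Next I would verify that the feasibility requirement on such an $\x^{k+1}$ is precisely the constraint set of~(\ref{twoDim}). By~(\ref{updatedXB}) the updated basic block is $\x_{B^k}^{k+1}=\bar{\b}-\bar{\A}_{(\jmath_1,\jmath_2)}\x_{(\jmath_1,\jmath_2)}$, and $\x^{k+1}\ge\0$ holds if and only if $\bar{\A}_{(\jmath_1,\jmath_2)}\x_{(\jmath_1,\jmath_2)}\le\bar{\b}$ together with $\x_{(\jmath_1,\jmath_2)}\ge\0$, which is exactly the feasible region of~(\ref{twoDim}). That region is nonempty ($\x_{(\jmath_1,\jmath_2)}=\0$ works since $\bar{\b}=\x_{B^k}\ge\0$), and its surrogate objective is bounded below on it because it coincides, up to the additive constant $z^k$, with $\c^{\Tr}\x$ restricted to a subset of the primal feasible set, which is bounded below by Assumption~2; hence~(\ref{twoDim}) attains its optimum. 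Combining this with the objective identity, problem~(\ref{twoDim}) is nothing but ``minimize $z^{k+1}$ over all feasible iterates obtained by moving only along the coordinates $\jmath_1$ and $\jmath_2$,'' so its optimum delivers the minimal attainable $z^{k+1}$ for the chosen pair of entering variables, which is the first claim. The most-negative rule~(\ref{enteringVar}) and the longest-step rule~(\ref{iniVar2}) enter only through the selection of $\jmath_1$ and $\jmath_2$; it is worth recording that the ordinary single-pivot iterate produced by Dantzig's ratio test~(\ref{stepsize}) on $\jmath_1$ (with $x_{\jmath_2}=0$) is feasible for~(\ref{twoDim}), and so is the longest single step along $\jmath_2$, so the double-pivot update is never worse than either classical simplex step.

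The one point that needs a little care --- and which I expect to be the main, though modest, obstacle --- is the bookkeeping of which variables actually leave the basis and whether the new columns $\A_{B^{k+1}}$ remain linearly independent. At an optimal vertex of the two-variable program~(\ref{twoDim}) exactly two of its inequalities are active; Assumption~4 keeps the visited basic solutions non-degenerate, ruling out the case in which more than two basic variables vanish simultaneously, so $B^{k+1}$ is obtained by dropping the one or two basic indices whose rows are tight at that vertex and inserting the matching one or two of $\jmath_1,\jmath_2$ that turned positive; linear independence of the new columns follows because the rows active at a vertex of~(\ref{twoDim}) are linearly independent as functionals, and the origin is never optimal since each $\bar{c}_{\jmath_i}<0$ and non-degeneracy permits a strictly improving move along that coordinate. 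In each of these subcases --- two variables entering, or effectively only one --- the objective identity derived above holds verbatim, which is why the conclusion $z^{k+1}-z^k=\c^{\Tr}\x^{k+1}-\c^{\Tr}\x^k$ remains valid without exception.
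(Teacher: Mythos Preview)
Your argument is correct and follows essentially the same route as the paper: you expand the objective via the reduced-cost reparametrization~(\ref{LP2})--(\ref{cNbar}) to get $\c^{\Tr}\x^{k+1}-\c^{\Tr}\x^k=\bar{\c}_{(\jmath_1,\jmath_2)}^{\Tr}\x_{(\jmath_1,\jmath_2)}$, and you identify the feasibility of $\x^{k+1}$ with the constraint set of~(\ref{twoDim}) via~(\ref{updatedXB}). This is exactly what the paper does, only you spell out more of the algebra and add an existence argument for the optimum of~(\ref{twoDim}).

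Your third paragraph on basis bookkeeping and linear independence of $\A_{B^{k+1}}$ is additional material that the lemma itself does not claim and the paper does not address here; it is not needed for the stated conclusion. One small caution there: your appeal to Assumption~4 to rule out more than two basic components vanishing at the 2D optimum is a bit quick, since non-degeneracy of the original LP does not immediately translate into non-degeneracy of the auxiliary problem~(\ref{twoDim}) without an extra word --- but since this whole paragraph is supplementary to the lemma, it does not affect the correctness of the proof proper.
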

\begin{proof}
From (\ref{updatedXB}), the constraints of 
(\ref{twoDim}) make sure that the updated 
$\x_{B^k}^{k+1} \ge \0$ and the leaving variables are zeros. 
For variables in $N^k$, they will stay in zeros except two 
entering variables $\x_{(\jmath_1,\jmath_2)} \ge \0$, 
and we may write variables in $N^k$ as a block vector 
$\x_{N^k}^{k+1}=(\x_{(\jmath_1,\jmath_2)},\0)$. 
The improvement of $\c^{\Tr} \x^{k+1} - \c^{\Tr} \x^k$
is the optimal solution of (\ref{twoDim}), which is achieved 
when the optimal combination of the two entering variables
is determined.
\hfill \qed
\end{proof}
As problem (\ref{twoDim}) has only two variables, 
the solution is slightly more complicate than the 
selection of a single entering variable in existing pivot 
rules, but is still simple and straightforward. We divide 
$\bar{\A}_{(\jmath_1,\jmath_2)}$ into two parts: 
$\bar{\A}_1$ has the rows with at least one positive element,
and $\bar{\A}_2 \le \0$ has the rows with all elements 
smaller than or equal to zero. Also we partition 
$\bar{\b}=\A_B^{-1} \b$ into the corresponding 
$\bar{\b}_1$ and $\bar{\b}_2$. Since elements in 
$\bar{\A}_2$ are smaller than or equal to zero, in view of 
(\ref{updatedXB}) or (\ref{twoDim}), introducing positive 
entering variables will keep the corresponding
elements in $\x_{B^k}^{k+1}$ to be positive. 
For $\bar{\A}_1$, in view of (\ref{updatedXB})
or (\ref{twoDim}), introducing positive variables may change
the sign of some elements of $\x_{B^k}^{k+1}$. 
If the number of rows in 
$\bar{\A}_1$ is greater than or equal to $2$, for any two 
independent rows $(i_1,i_2)$ of $\bar{\A}_1$
denoted as $\bar{\A}_1(i_1,i_2)$, solving 
\begin{equation}
\bar{\A}_1(i_1,i_2) \left[ \begin{array}{c}
x_{\jmath_1} \\ x_{\jmath_2} \end{array} \right]
= \bar{\b}_1(i_1,i_2)
\label{subEq}
\end{equation}
will give a possible vertex in the convex polygon
defined in (\ref{twoDim}). Therefore 
$\x_2 := (x_{\jmath_1}, x_{\jmath_2}) \ge \0$ 
is a feasible vertex of the polygon if 
$\bar{\A}_1 \x_2 \le \bar{\b}_1$ holds. Otherwise,
it is not feasible and will not be considered further.
Two special feasible vertices, i.e., $\x_2 := (x_{\jmath_1}, 0)$
and $\x_2 :=(0, x_{\jmath_2})$ which correspond to 
the most negative rule and the longest step-size 
rule respectively, should also be considered. 
For all feasible vertices of the convex polygon
defined in (\ref{twoDim}), we select the vertex %$\x_2$
that minimizes the objective function of (\ref{twoDim}).
The corresponding row indexes $(\imath_1,\imath_2)$
that form the selected vertex determine
the leaving variables. If the number of rows in 
$\bar{\A}_1$ is exact one, the longest step pivot rule is used.

The proposed algorithm is therefore as follows:

\begin{algorithm} {\ } \\
Data: Matrix $\A$, vectors $\b$ and $\c$. {\ } \\
Initial basic feasible solution $\x^0$, and its related partitions
$\x_{B^0}$, $\x_{N^0}$, $\A_{B^0}$, $\A_{N^0}$, 
$\c_{B^0}$, $\c_{N^0}$, $(\A_{B^0})^{-1}$, and
$\bar{\c}_{N^0}^{\Tr}=\c_{N^0}^{\Tr}
 - \c_{B^0}^{\Tr} (\A_{B^0})^{-1} \A_{N^0}$.
\begin{itemize}
\item[] While $\min (\bar{\c}_{N^k}) < 0$

\begin{itemize}
\item[] If at least two elements of $\bar{\c}_{N^k}$ are 
smaller than zero

\begin{itemize}
\item[+] The first entering variable $x_{\jmath_1^k}$ is determined 
by Dantzig's rule. For all negative elements of $\bar{\c}_{N^k}$ 
other than the most negative elements $\bar{\c}_{\jmath_1^k}$, 
determine the $x_{\jmath_2^k}$ such that the second entering 
variable will take the longest step. Two special vertices,
$(x_{\jmath_1^k}, 0)$ and $(0, x_{\jmath_2^k})$ are 
obtained.
\item[+] Divide $\bar{\A}_({\jmath_1,\jmath_2})$
into two parts: $\bar{\A}_1$ whose row has positive elements 
and $\bar{\A}_2 \le \0$. Partition $\A_{B^k}^{-1} \b$ into 
the corresponding $\bar{\b}_1$ and $\bar{\b}_2$.
\item[+] If the number of rows of $\bar{\A}_1$ is greater than
or equal to $2$
\begin{itemize}
\item[-] Compute all vertices in two dimensional plane 
by solving (\ref{subEq}).
\item[-] Determine all feasible vertices which satisfy 
$\x_2=(x_{\jmath_1},x_{\jmath_2}) \ge \0$
and $\bar{\A}_1 \x_2 \le \bar{\b}_1$.
\item[-] Find a pair of entering variables among all feasible 
vertices $\x_2$ (including the two special vertices) 
that minimizes the objective 
$[\bar{c}_{\jmath_1}, \bar{c}_{\jmath_2}] \x_2$.
\item[-] Update base $\A_{B^k}$ and $\c_{B^k}$, 
non-base $\A_{N^k}$ and $\c_{N^k}$.
Compute $\A_{B^k}^{-1}$ and 
$\bar{\c}_{N^k}^{\Tr} = \c_{N^k}^{\Tr}
 - \c_{B^k}^{\Tr} \A_{B^k}^{-1} \A_{N^k}$.
\end{itemize}

\item[+] Else if there is only one row in $\bar{\A}_1$
\begin{itemize}
\item[-] The longest step rule is applied.
\item[-] Update base $\A_{B^k}$ and $\c_{B^k}$, 
non-base $\A_{N^k}$ and $\c_{N^k}$.
Compute $\A_{B^k}^{-1}$ and 
$\bar{\c}_{N^k}^{\Tr} = \c_{N^k}^{\Tr} 
- \c_{B^k}^{\Tr} \A_{B^k}^{-1} \A_{N^k}$.
\end{itemize}
\item[+] end  (if)

%\end{itemize}

\end{itemize}
\item[] Else if only one element of $(\bar{\c}_{N^k})$ is negative, 
\begin{itemize}
\item[+] Dantzig's rule (which is also the longest rule) is applied.
\item[+] Update base $\A_{B^k}$ and $\c_{B^k}$, 
non-base $\A_{N^k}$ and $\c_{N^k}$.
Compute $\A_{B^k}^{-1}$ and 
$\bar{\c}_{N^k}^{\Tr} = \c_{N^k}^{\Tr}
 - \c_{B^k}^{\Tr} \A_{B^k}^{-1} \A_{N^k}$.
\end{itemize}
\item[] end (if)

\item[] $k \Leftarrow k+1$.

\end{itemize}
\item[] end (while)

\end{itemize}
%\hfill \qed
\label{mainAlg}
\end{algorithm}

\begin{remark}
We can modify the algorithm by selecting two entering
variables using the indexes corresponding to the {\bf two} most 
negative elements in $\bar{\c}_{N^k}$ as \cite{ve18}. In the 
numerical test section, we will see that this is a not
a good strategy.
\label{mostNeg}
\end{remark}

\section{Bounds of the iteration numbers of the algorithm}

In this section, we provide two upper bounds of the iteration
numbers for the proposed algorithm using the strategy 
developed in \cite{km11,km13,ye11}. 

Let $r$ be any real number and $\lceil r \rceil$ be the 
smallest integer bigger than $r$. Let $\gamma_P^*$
be the maximum value of all elements of $\x^*$
and 
\begin{equation}
\gamma_D = \max_{k} \{ \gamma_D^k \} =
\max_{k} \Big\{ \max_{j^k \in N^k} 
\{ -\bar{c}_{j^k} ~|~ \bar{c}_{j^k} < 0 \} \Big\}.
\label{gammaD}
\end{equation}
Let $(B^k,N^k)$ be the partitions of base and 
non-base variables at iteration $k$ and $(B^*,N^*)$
be the partitions of base and non-base variables of
the optimization solution. Let $\x^*$ be partitioned 
using $(B^k,N^k)$ but not $(B^*,N^*)$, i.e.,
\[
\x^*= \left[ \begin{array}{c}
\x_{B^k}^* \\ \x_{N^k}^* \end{array} \right].
\]
The first lemma is derived using exactly the same 
argument but states a slightly improved result 
of \cite{km11,km13}.

\begin{lemma} (Kitahara and Mizuno)
Let $\x^*$ be partitioned using $(B^k,N^k)$
and $z^*$ be the optimal value of (\ref{LP}), we have
\begin{equation}
\c^{\Tr} \x^k - z^* 
\le \gamma_D^k \| \x_{N^k}^* \|_1.
\label{1stEst}
\end{equation}
\label{firstL}
\end{lemma}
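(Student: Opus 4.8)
The plan is to exploit weak duality together with complementary slackness, expressing the optimality gap $\c^{\Tr}\x^k - z^*$ in terms of the reduced cost vector at iteration $k$. First I would write $z^* = \c^{\Tr}\x^*$ and use the partition of $\x^*$ induced by $(B^k, N^k)$, so that $\x^* = (\x_{B^k}^*, \x_{N^k}^*)$ with $\A_{B^k}\x_{B^k}^* + \A_{N^k}\x_{N^k}^* = \b$, hence $\x_{B^k}^* = \A_{B^k}^{-1}\b - \A_{B^k}^{-1}\A_{N^k}\x_{N^k}^* = \bar{\b} - \bar{\A}_{N^k}\x_{N^k}^*$. Substituting into $\c^{\Tr}\x^* = \c_{B^k}^{\Tr}\x_{B^k}^* + \c_{N^k}^{\Tr}\x_{N^k}^*$ and collecting terms yields $z^* = \c_{B^k}^{\Tr}\bar{\b} + \bar{\c}_{N^k}^{\Tr}\x_{N^k}^*$, where $\bar{\c}_{N^k}$ is the reduced cost vector defined in (\ref{cNbar}). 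Since $\c^{\Tr}\x^k = \c_{B^k}^{\Tr}\x_{B^k}^k = \c_{B^k}^{\Tr}\bar{\b}$ (because $\x_{N^k}^k = \0$), this gives the exact identity $\c^{\Tr}\x^k - z^* = -\bar{\c}_{N^k}^{\Tr}\x_{N^k}^*$.

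From here the bound follows by a routine estimate. Because $\x^* \ge \0$, each component of $\x_{N^k}^*$ is nonnegative; and for any index $j^k \in N^k$, either $\bar{c}_{j^k} \ge 0$, in which case that term contributes nothing positive to $-\bar{\c}_{N^k}^{\Tr}\x_{N^k}^*$, or $\bar{c}_{j^k} < 0$, in which case $-\bar{c}_{j^k} \le \gamma_D^k$ by the definition (\ref{gammaD}) of $\gamma_D^k$ as the maximum of the $-\bar{c}_{j^k}$ over negative reduced-cost indices at iteration $k$. Therefore
\[
\c^{\Tr}\x^k - z^* = -\bar{\c}_{N^k}^{\Tr}\x_{N^k}^*
= \sum_{j^k \in N^k} (-\bar{c}_{j^k})\, x_{j^k}^*
\le \sum_{\{j^k : \bar{c}_{j^k} < 0\}} \gamma_D^k\, x_{j^k}^*
\le \gamma_D^k \sum_{j^k \in N^k} x_{j^k}^*
= \gamma_D^k \|\x_{N^k}^*\|_1,
\]
which is exactly (\ref{1stEst}).

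There is essentially no serious obstacle here; the only point requiring care is bookkeeping the partition. One must be careful that $(B^k, N^k)$ is the basis at iteration $k$, which differs from the optimal basis $(B^*, N^*)$, so that $\x_{N^k}^*$ is generically nonzero even though $\x_{N^*}^* = \0$. It is also worth checking that the identity $\c^{\Tr}\x^k - z^* = -\bar{\c}_{N^k}^{\Tr}\x_{N^k}^*$ is consistent with weak duality: since $\x^k$ is feasible and $\x^*$ optimal we have $\c^{\Tr}\x^k - z^* \ge 0$, which forces $\bar{\c}_{N^k}^{\Tr}\x_{N^k}^* \le 0$ — this is automatic only when combined with the sign structure above, and indeed the derivation shows the negative reduced costs are exactly what drive the gap. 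No nondegeneracy or boundedness assumption (items 3–4) is needed for this lemma; it holds for any feasible basis.
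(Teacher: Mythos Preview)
Your proof is correct and follows essentially the same approach as the paper: both substitute $\x_{B^k}^* = \A_{B^k}^{-1}\b - \A_{B^k}^{-1}\A_{N^k}\x_{N^k}^*$ into $\c^{\Tr}\x^*$ to obtain the identity $\c^{\Tr}\x^k - z^* = -\bar{\c}_{N^k}^{\Tr}\x_{N^k}^*$, and then bound using $\x_{N^k}^* \ge \0$ and the definition of $\gamma_D^k$. Your treatment of the final inequality, explicitly separating the indices with nonnegative versus negative reduced cost, is slightly more detailed than the paper's one-line bound, but the argument is the same.
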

\begin{proof}
Since $\x^*$ is partitioned using $(B^k,N^k)$, 
we have $(\x_{B^k}^* , \x_{N^k}^*) \ge \0$, and
\begin{eqnarray} 
\A_{B^k}\x_{B^k}^* +\A_{N^k}\x_{N^k}^*= \b. \nonumber
\end{eqnarray} 
This gives
\begin{eqnarray} 
\x_{B^k}^*  
= \A_{B^k}^{-1} \b -\A_{B^k}^{-1} \A_{N^k}\x_{N^k}^*.
\nonumber 
\end{eqnarray}  
Therefore, we have
\begin{eqnarray} 
\c^{\Tr} \x^* & = & 
\c_{B^k}^{\Tr}\x_{B^k}^* + {\c}_{N^k}^{\Tr}\x_{N^k}^*  
\nonumber \\
& = & \c_{B^k}^{\Tr}  \A_{B^k}^{-1} \b 
-{\c}_{B^k}^{\Tr}\A_{B^k}^{-1}\A_{N^k}\x_{N^k}^*
+{\c}_{N^k}^{\Tr}\x_{N^k}^*   
\nonumber \\
 & = & \c_{B^k}^{\Tr}  \A_{B^k}^{-1} \b 
+ ({\c}_{N^k}^{\Tr}-{\c}_{B^k}^{\Tr}\A_{B^k}^{-1}\A_{N^k})
\x_{N^k}^*.
\end{eqnarray}
Using this relation and (\ref{cNbar}), we have 
\begin{eqnarray}
z^* & = & \c^{\Tr} \x^*   \nonumber \\
& = & \c_{B^k}^{\Tr}  \A_{B^k}^{-1} \b 
+ \bar{\c}_{N^k}^{\Tr}\x_{N^k}^*  \nonumber \\ 
& \ge &  \c^{\Tr} \x^k -  \gamma_D^k \| \x_{N^k}^* \|_1.
\label{est1}
\end{eqnarray}
This finishes the proof.
\hfill\qed
\end{proof}

\begin{remark}
If $B^* \ne B^k$, i.e., $\x^k$ is not an optimizer, from
(\ref{est1}), it must have $\bar{\c}_{N^k}^{\Tr}\x_{N^k}^* <0$.
Therefore, there is a $j^k \in N^k$ such that 
\begin{equation}
\bar{c}_{j^k} <0 \hspace{0.1in} \mbox{and}
\hspace{0.1in} x_{j^k}^*>0.
\label{NkBstar}
\end{equation}
This means that for $j^k \in N^k \cap B^*$,
$x_{j^k}$ should be the entering variable.
The problem is that one does not know $B^*$
before an optimizer is found.
\label{totalD}
\end{remark}

We may also partition $\x^k$ using $(B^*,N^*)$ as
\[
\x^k= \left[ \begin{array}{c}
\x_{B^*}^k \\ \x_{N^*}^k \end{array} \right].
\]
This gives
\begin{eqnarray} 
\A_{B^*}\x_{B^*}^k +\A_{N^*}\x_{N^*}^k= \b, \nonumber
\end{eqnarray} 
and
\begin{eqnarray} 
\x_{B^*}^k  
= \A_{B^*}^{-1} \b -\A_{B^*}^{-1} \A_{N^*}\x_{N^*}^k.
\nonumber 
\end{eqnarray}  
Similar to the derivation of (\ref{1stEst}), we have
\begin{eqnarray} 
\c^{\Tr} \x^k  = \c^{\Tr} \x^* + 
({\c}_{N^*}- {\c}_{B^*}\A_{B^*}^{-1} \A_{N^*})^{\Tr} \x_{N^*}^k
= z^* + \bar{\c}_{N^*}^{\Tr}\x_{N^*}^k.
\nonumber 
\end{eqnarray} 
If $x_{j}^k > 0$, we have $x_{j}^k \in B^k$. For 
$j \in {N^*} \cap B^k$, since $\bar{\c}_{N^*} \ge \0$, we have
\begin{eqnarray} 
z^* & \ge  \c^{\Tr} \x^k -   
\max \{ x_j^k   ~|~ j  \in N^* \cap B^k \}
\| \bar{\c}_{N^*}  \|_1.
\end{eqnarray}

\begin{remark}
If $N^* \ne N^k$, i.e., $\x^k$ is not an optimizer, it
must have $\bar{\c}_{N^*}^{\Tr}\x_{N^*}^k >0$.
Therefore, there is a $j^* \in N^*$ such that 
\begin{equation}
\bar{c}_{j^*} >0 \hspace{0.1in} \mbox{and}
\hspace{0.1in} x_{j^*}^k>0.
\label{NkBstar2}
\end{equation}
This means that for $j^* \in N^* \cap B^k$,
$0 < x_{j^*}^k \in B^k$ should be the leaving variable.
The problem is that one does not know $N^*$
before an optimizer is found.
\label{totalDiff}
\end{remark}

Let $\gamma_{\ell}=\min_{k} x_{\jmath_2^k}$, where 
$x_{\jmath_2^k}$ is defined in (\ref{iniVar2}), i.e., 
$x_{\jmath_2^k}$ is the longest step among all
possible entering variables with $\bar{c}_{j^k} <0$
and $j^k \in N^k$; and define
\begin{equation}
\delta_D =\min_{k} \delta_D^k
= \min_{k}  \Big\{ \min \{ -\bar{c}_{j^k} ~| ~j^k \in N^k
\hspace{0.1in}\mbox{and} \hspace{0.1in} \bar{c}_{j^k}<0 \}
\Big\}.
\label{deltaD}
\end{equation}
Considering Algorithm \ref{mainAlg}, our next lemma
is an improvement of the one in \cite{km13}.

\begin{lemma}
Let $\x^k$ and $\x^{k+1}$ be the $k$th and $(k+1)$th 
iterates generated by Algorithm \ref{mainAlg}. 
If $\x^k$ is not optimal and $\x^k \neq \x^{k+1}$, 
then, we have
\begin{equation}
\c^{\Tr} \x^k - \c^{\Tr} \x^{k+1} \ge \delta_D \gamma_{\ell}.
\label{myIneq}
\end{equation}
\label{diffObj}
\end{lemma}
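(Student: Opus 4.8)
The plan is to exploit the fact that Algorithm \ref{mainAlg} always includes, among the candidate vertices it compares in problem (\ref{twoDim}), the two special one‑dimensional vertices $(x_{\jmath_1},0)$ and $(0,x_{\jmath_2})$, the latter realizing the longest‑step pivot. Since the algorithm picks the vertex of the polygon (\ref{twoDim}) that \emph{minimizes} the objective $\bar{\c}_{(\jmath_1,\jmath_2)}^{\Tr}\x_{(\jmath_1,\jmath_2)}$, the improvement it achieves is at least as large as the improvement obtained by moving to $(0,x_{\jmath_2})$ alone. By Lemma \ref{firstLemma}, the objective value of (\ref{twoDim}) at the chosen vertex equals $\c^{\Tr}\x^{k+1}-\c^{\Tr}\x^k$, so it suffices to lower‑bound the decrease produced by the longest‑step move in the single variable $x_{\jmath_2^k}$.

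First I would write this single‑variable decrease explicitly: moving only $x_{\jmath_2^k}$ from $0$ to its longest admissible value changes the objective by $\bar{c}_{\jmath_2^k}\, x_{\jmath_2^k}$, which is negative because $\bar{c}_{\jmath_2^k}<0$. Hence
\[
\c^{\Tr}\x^{k}-\c^{\Tr}\x^{k+1}\ \ge\ -\,\bar{c}_{\jmath_2^k}\, x_{\jmath_2^k}.
\]
Next I would bound the two factors on the right from below by the quantities in the statement. By definition (\ref{iniVar2}), $x_{\jmath_2^k}$ is the largest step‑size over all non‑basic indices with negative reduced cost, and $\gamma_{\ell}=\min_k x_{\jmath_2^k}$, so $x_{\jmath_2^k}\ge\gamma_{\ell}$. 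For the reduced cost, since $\jmath_2^k\in N^k$ has $\bar{c}_{\jmath_2^k}<0$, the definition (\ref{deltaD}) of $\delta_D$ as the minimum over $k$ of $\min\{-\bar{c}_{j^k}\mid j^k\in N^k,\ \bar{c}_{j^k}<0\}$ gives $-\bar{c}_{\jmath_2^k}\ge\delta_D$. Combining, $\c^{\Tr}\x^{k}-\c^{\Tr}\x^{k+1}\ge(-\bar{c}_{\jmath_2^k})\,x_{\jmath_2^k}\ge\delta_D\,\gamma_{\ell}$, which is (\ref{myIneq}).

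The one point that needs a little care — and where I expect the only real friction — is the degenerate case flagged in Section \ref{proposedAlgo}: if the most‑negative rule already yields the longest step, i.e.\ $\jmath_2^k=\jmath_1^k$, the algorithm instead takes for $x_{\jmath_2^k}$ the variable with the \emph{second} largest step. I would note that in this case the vertex $(x_{\jmath_1^k},0)$ itself is the longest‑step move and is always among the compared vertices, so the decrease is still at least $-\bar{c}_{\jmath_1^k}\,x_{\jmath_1^k}$ with $x_{\jmath_1^k}\ge\gamma_{\ell}$ (as the true longest step, it is $\ge\min_k x_{\jmath_2^k}$ under the convention that $\gamma_\ell$ tracks the genuine longest step) and $-\bar{c}_{\jmath_1^k}\ge\delta_D$; the bound follows identically. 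Likewise, when $\bar{\A}_1$ has a single row the algorithm uses the longest‑step rule outright, and when only one reduced cost is negative Dantzig's rule coincides with the longest‑step rule, so in every branch of Algorithm \ref{mainAlg} the achieved decrease dominates $\delta_D\gamma_{\ell}$, using only $\x^k\neq\x^{k+1}$ (which guarantees the step is strictly positive) and $\x^k$ not optimal (which guarantees some $\bar{c}_{j^k}<0$).
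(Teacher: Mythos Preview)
Your argument is essentially the same as the paper's: use Lemma~\ref{firstLemma} to identify $\c^{\Tr}\x^k-\c^{\Tr}\x^{k+1}$ with the negative optimal value of (\ref{twoDim}), observe this dominates the single-variable longest-step move $-\bar{c}_{\jmath_2^k}x_{\jmath_2^k}$, and then bound each factor below by $\delta_D$ and $\gamma_\ell$ via their definitions. The paper's proof is terser and does not spell out the degenerate branches you worry about, but the core chain of inequalities is identical.
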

\begin{proof}
Since $\x^k \neq \x^{k+1}$, from Lemma
\ref{firstLemma}, the difference of the objective 
functions between $k$th and $(k+1)$th iterations
is actually the solution of (\ref{twoDim}), which is
smaller than the special case when only one entering
variable $x_{\jmath_2^k}$, which would generate 
the longest step among $\bar{c}_{j^k}<0$ for all 
$j^k \in N^k$, is selected. Let 
$\bar{\x}_{(\jmath_1^k,\jmath_2^k)}$ be the optimal
solution of (\ref{twoDim}) at iteration $k$. Therefore
\begin{eqnarray}
\c^{\Tr} \x^k - \c^{\Tr} \x^{k+1} 
& = & - \bar{\c}_{(\jmath_1^k,\jmath_2^k)}^{\T}
\bar{\x}_{(\jmath_1^k,\jmath_2^k)}  
\nonumber \\
& \ge &  -\bar{c}_{\jmath_2^k}~{x}_{\jmath_2^k}
\nonumber \\
& \ge &  \delta_D \gamma_{\ell}.
\label{myIneq1}
\end{eqnarray}
This finishes the proof. 
\hfill \qed
\end{proof}

\begin{remark}
Lemma \ref{diffObj} says that for Algorithm \ref{mainAlg},
the objective value decreases in every iteration by at least a 
constant $\delta_D \gamma_{\ell}$.%defined by (\ref{myIneq}).
\label{constantD}
\end{remark}

From Lemmas \ref{firstL}, \ref{diffObj} and 
Remark \ref{constantD}, it is easy to show that the 
following upper bound of iteration numbers
of Algorithm \ref{mainAlg} holds.

\begin{theorem}
Suppose that we generate a sequence of basic feasible
solutions by Algorithm \ref{mainAlg} from an initial
iterate $\x^0$. Then, the number of total iterations is
bounded above by 
\begin{equation}
\Bigl\lceil 
\frac{\c^{\Tr} \x^0 - z^*}{\delta_D \gamma_{\ell}}
\Bigr\rceil \le
\Bigl\lceil
\frac{\gamma_D^0\| \x^* \|_1 }
{\delta_D \gamma_{\ell}}
\Bigr\rceil
\label{bound}
\end{equation}
\label{mainThe}
\end{theorem}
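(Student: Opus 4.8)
The plan is to combine the uniform per-iteration decrease of the objective value (Lemma \ref{diffObj}) with a telescoping argument, and then to control the initial optimality gap with Lemma \ref{firstL}.

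First I would argue that every iterate $\x^k$ produced by Algorithm \ref{mainAlg} that is not optimal satisfies $\x^k \neq \x^{k+1}$. Indeed, by the non-degeneracy hypothesis (Assumption~4, with $\delta>0$) the longest-step candidate $x_{\jmath_2^k}$ of (\ref{iniVar2}) is at least $\gamma_\ell>0$, and the optimal value of the two-variable program (\ref{twoDim}) is no worse than the value obtained by taking that single longest step; hence a genuine move is made. Consequently Lemma \ref{diffObj} (equivalently Remark \ref{constantD}) is in force at every such step and gives $\c^{\Tr}\x^k - \c^{\Tr}\x^{k+1} \ge \delta_D\gamma_\ell$, where $\delta_D\gamma_\ell>0$ because $\delta_D$ is a minimum of positive quantities $-\bar{c}_{j^k}$ and $\gamma_\ell>0$.

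Next, suppose the ``While'' loop runs for $T$ iterations, i.e., $\x^T$ is the first basic feasible solution with $\bar{\c}_{N^T}\ge\0$, so that $\c^{\Tr}\x^T = z^*$ while $\x^0,\ldots,\x^{T-1}$ are not optimal. Telescoping the inequality of Lemma \ref{diffObj} over $k=0,1,\ldots,T-1$ gives $\c^{\Tr}\x^0 - z^* = \sum_{k=0}^{T-1}\bigl(\c^{\Tr}\x^k - \c^{\Tr}\x^{k+1}\bigr) \ge T\,\delta_D\gamma_\ell$, hence $T \le (\c^{\Tr}\x^0 - z^*)/(\delta_D\gamma_\ell)$; since $T$ is a nonnegative integer, $T \le \lceil (\c^{\Tr}\x^0 - z^*)/(\delta_D\gamma_\ell)\rceil$, which is the left-hand bound in (\ref{bound}) (and in particular shows the loop terminates). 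For the second inequality I would apply Lemma \ref{firstL} at $k=0$: $\c^{\Tr}\x^0 - z^* \le \gamma_D^0\,\|\x_{N^0}^*\|_1 \le \gamma_D^0\,\|\x^*\|_1$, the last step holding because $\x_{N^0}^*$ is a subvector of the nonnegative vector $\x^*$. Since $\delta_D\gamma_\ell>0$ and $\lceil\cdot\rceil$ is non-decreasing, this yields the right-hand inequality in (\ref{bound}).

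The only genuinely delicate point is the first step: one must be sure that each non-optimal iteration strictly decreases the objective by at least the positive constant $\delta_D\gamma_\ell$. This is precisely where Assumption~4 is indispensable, since it keeps both $\gamma_\ell$ and $\delta_D$ bounded away from zero and guarantees $\x^k\neq\x^{k+1}$ so that Lemma \ref{diffObj} applies; everything after that is a telescoping sum and monotonicity of the ceiling function, which are routine.
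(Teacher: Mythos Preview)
Your proof is correct and follows essentially the same approach as the paper: use Lemma~\ref{diffObj} (Remark~\ref{constantD}) to get a uniform per-iteration decrease of at least $\delta_D\gamma_\ell$, divide the initial optimality gap by this constant, and then bound that gap via Lemma~\ref{firstL} at $k=0$. Your argument is in fact slightly more careful than the paper's, since you explicitly verify the hypothesis $\x^k\neq\x^{k+1}$ of Lemma~\ref{diffObj} from the non-degeneracy assumption and spell out why $\|\x_{N^0}^*\|_1\le\|\x^*\|_1$; the paper leaves these implicit.
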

\begin{proof}
Since every iteration will reduce the objective function by at least a
constant $\delta_D \gamma_{\ell}$, and the total difference 
between the initial objective function and the optimal 
objective function is $\c^{\Tr} \x^0 - z^*$,  we need at most 
\[
\Bigl\lceil
\frac{\c^{\Tr} \x^0 - z^* }
{\delta_D \gamma_{\ell}}
\Bigr\rceil
\]
iterations to find the optimal solution.
The bound of the left side of (\ref{bound}) is obtained.
By the definition of $\gamma_D^k$, we have
\[
\bar{\c}_{N^k}^{\Tr}\x_{N^k}^* \ge 
-{\gamma_D^k \| \x_{N^k}^* \|_1 }.
\]
Therefore, for initial step, the last inequality of 
(\ref{est1}) can be replaced by
\[
\c^{\Tr} \x^0 - z^* \le \gamma_D^0 \| \x^* \|_1 .
\]
This shows the inequality of (\ref{bound}).
\hfill\qed
\end{proof}

\begin{remark}
The upper bound given in Theorem \ref{mainThe} is smaller than
the one in \cite{km13} because (a) $\gamma_{\ell}$ is the smallest
value in all longest steps among all iterates while the 
corresponding number in \cite{km13} is the smallest value 
in all nonzero components among all iterates $\x^k$, 
(b) $\| \x^* \|_1 $ depends only on the optimal solution
of $\x^*$, and (c) $\gamma_D^0$ depends only on the 
vector $\c$.
\end{remark}

Now, we present an upper bound in terms of only $\delta$
and $\gamma$ defined in Assumption 4.

\begin{theorem}
Assume that the $k$th iterate generated by Algorithm \ref{mainAlg}
is not an optimizer. Let
\begin{equation}
t = %\Bigl\lceil
m \frac{\gamma}{\delta}
\log\left( m \frac{\gamma}{\delta} \right)
%\Bigr\rceil,
\label{tIter}
\end{equation}
then there is a $\bar{j} \in B^k$, a corresponding
$x_{\bar{j}}^k>0$, after at most another $\lceil t \rceil$ iterations, 
$x_{\bar{j}}^{k+t}$ becomes zero and stays there since then.
\label{thm2}
\end{theorem}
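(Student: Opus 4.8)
The plan is to adapt the Kitahara--Mizuno argument \cite{km11,km13}: track a single coordinate $\bar j$ that is basic (hence positive) at $\x^k$ but zero at the optimum, show that the optimality gap $g^j:=\c^{\Tr}\x^j-z^*$ contracts by a fixed multiplicative factor at every iteration of Algorithm \ref{mainAlg}, and conclude that after $\lceil t\rceil$ steps the gap is too small for $x_{\bar j}$ to remain basic. To choose $\bar j$: since $\x^k$ is not optimal, the identity obtained just before Remark \ref{totalDiff} gives $g^k=\bar{\c}_{N^*}^{\Tr}\x_{N^*}^k=\sum_{j\in N^*\cap B^k}\bar c_j x_j^k>0$, where the $\bar c_j\ge 0$ ($j\in N^*$) are the reduced costs at the optimal basis $B^*$. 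Let $\bar j\in N^*\cap B^k$ maximize the product $\bar c_j x_j^k$; then $\bar c_{\bar j}x_{\bar j}^k>0$, so $\bar c_{\bar j}>0$ and $x_{\bar j}^k>0$, and Assumption~4 gives $\delta\le x_{\bar j}^k\le\gamma$. Since the sum defining $g^k$ has at most $|B^k|=m$ nonnegative terms,
\[
\bar c_{\bar j}\,\delta\ \le\ \bar c_{\bar j}\,x_{\bar j}^k\ \le\ g^k\ \le\ m\,\bar c_{\bar j}\,x_{\bar j}^k\ \le\ m\,\bar c_{\bar j}\,\gamma .
\]

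For the geometric decay: a double-pivot step of Algorithm \ref{mainAlg} is at least as good as the single Dantzig pivot on $x_{\jmath_1^k}$, because the vertex $(x_{\jmath_1^k},0)$ is one of the candidates in (\ref{twoDim}), its objective value there equals $-\gamma_D^k x_{\jmath_1^k}$, and $x_{\jmath_1^k}\ge\delta$ (it is a positive basic value of a basic feasible solution; the ratio test is finite by Assumption~2). Hence, by Lemma \ref{firstLemma}, $\c^{\Tr}\x^k-\c^{\Tr}\x^{k+1}\ge\gamma_D^k\,\delta$. By Lemma \ref{firstL} together with $\|\x_{N^k}^*\|_1\le\|\x^*\|_1\le m\gamma$ we also have $g^k\le\gamma_D^k\,m\gamma$, i.e.\ $\gamma_D^k\ge g^k/(m\gamma)$, so
\[
g^{k+1}=g^k-\bigl(\c^{\Tr}\x^k-\c^{\Tr}\x^{k+1}\bigr)\ \le\ g^k-\gamma_D^k\,\delta\ \le\ \Bigl(1-\tfrac{\delta}{m\gamma}\Bigr)g^k .
\]
Iterating and using $1-u\le e^{-u}$ gives $g^{k+s}\le e^{-s\delta/(m\gamma)}g^k\le m\,\bar c_{\bar j}\,\gamma\,e^{-s\delta/(m\gamma)}$.

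Now I finish as follows. Whenever $x_{\bar j}^{k+s}>0$ we have $\bar j\in B^{k+s}$, hence $x_{\bar j}^{k+s}\ge\delta$; and since $\bar j\in N^*$ all terms of $g^{k+s}=\sum_{j\in N^*}\bar c_j x_j^{k+s}$ are nonnegative, so $g^{k+s}\ge\bar c_{\bar j}x_{\bar j}^{k+s}\ge\bar c_{\bar j}\,\delta$. But for $s>t=\tfrac{m\gamma}{\delta}\log\!\bigl(\tfrac{m\gamma}{\delta}\bigr)$ the last estimate forces $g^{k+s}<\bar c_{\bar j}\,\delta$, a contradiction; hence $x_{\bar j}^{k+s}=0$. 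Since $g$ is strictly decreasing along the iterates (Lemma \ref{diffObj}), once $g^{k+s}<\bar c_{\bar j}\,\delta$ this persists, so $x_{\bar j}$ remains zero thereafter; this proves the claim with $\lceil t\rceil$ (the borderline case in which $t$ is already an integer costs one additional step, absorbed by the strict decrease of $g$). Iterating this statement, once for each coordinate driven permanently to zero, then produces a global iteration count of Kitahara--Mizuno type.

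The main obstacle is to make the argument of the logarithm come out to exactly $m\gamma/\delta$. This is what dictates choosing $\bar j$ to maximize the \emph{product} $\bar c_j x_j^k$ (so that the unknown optimal-basis reduced cost $\bar c_{\bar j}$ cancels from both ends of the final inequality), and it is why the per-iteration improvement must be controlled by the \emph{relative} quantity $\gamma_D^k\delta\ge\frac{\delta}{m\gamma}g^k$ rather than by the additive constant $\delta_D\gamma_\ell$ of Lemma \ref{diffObj}. A secondary point is to verify that ``no worse than a Dantzig pivot'' holds in the degenerate branches of Algorithm \ref{mainAlg} --- a single negative reduced cost, or $\bar{\A}_1$ reducing to one row --- where the same lower bound $\gamma_D^k\delta$ on the improvement continues to hold.
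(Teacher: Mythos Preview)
Your proposal is correct and takes essentially the same approach as the paper: geometric contraction of the optimality gap via the ``at least as good as a Dantzig pivot'' lower bound $\gamma_D^k\delta$, a pigeonhole choice of $\bar j$ maximizing a product, and the resulting upper bound forcing $x_{\bar j}^{k+t}<\delta$. The only cosmetic differences are that the paper phrases the choice of $\bar j$ via the optimal dual slacks $s^*_{\bar j}$ and the complementary-slackness identity $\c^{\Tr}\x^k-z^*=\s^{*\Tr}\x^k$ (your optimal-basis reduced costs satisfy $\bar c_{\bar j}=s^*_{\bar j}$ for $\bar j\in N^*$, so this is the same object), and it reaches the final estimate through the identity $x^{\log y}=y^{\log x}$ together with $\log(1-u)\le -u$ where you use $1-u\le e^{-u}$.
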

\begin{proof}
%The proof follows the idea of \cite{ye11}.
In view of (\ref{est1}) in Lemma \ref{firstL}, since 
$\x^*$ has at most $m$ nonzero elements and
$\bar{\c}_{N^k}^{\Tr}\x_{N^k}^* \ge -\gamma_D^k (m \gamma)$,
we have 
\[
{\c}^{\Tr} \x^k -z^* \le m \gamma \gamma_D^k.
\]
Using this inequality, together with Lemma 
\ref{firstLemma}, (\ref{myIneq1}) in 
Lemma \ref{diffObj} and (\ref{enteringVar}), we have
\begin{eqnarray}
\c^{\Tr} \x^k - \c^{\Tr} \x^{k+1} 
& = & - \bar{\c}_{(\jmath_1^k,\jmath_2^k)}^{\T}
\bar{\x}_{(\jmath_1^k,\jmath_2^k)}  
\nonumber \\
& \ge &  -\bar{c}_{\jmath_1^k}~{x}_{\jmath_1^k}
\nonumber \\
& \ge &  \gamma_D^k \delta
\nonumber \\
& \ge &  \frac{\delta}{m\gamma}
\left( {\c}^{\Tr} \x^k -z^*  \right).
\nonumber %\label{myIneq2}
\end{eqnarray}
This shows
\begin{eqnarray}
\c^{\Tr} \x^k - z^* - (\c^{\Tr} \x^{k+1} -z^*)
\ge  \frac{\delta}{m\gamma}
\left( {\c}^{\Tr} \x^k -z^*  \right)
\nonumber %\label{myIneq2}
\end{eqnarray}
or equivalently
\begin{eqnarray}
\c^{\Tr} \x^{k+1} -z^*
\le  \left( 1 - \frac{\delta}{m\gamma} \right)
\left( {\c}^{\Tr} \x^k -z^*  \right).
\nonumber %\label{myIneq2}
\end{eqnarray}
Therefore, for any integer $t>0$, we have
\begin{eqnarray}
\frac{\c^{\Tr} \x^{k+t} -z^*} {\c^{\Tr} \x^k -z^*}
\le  \left( 1 - \frac{\delta}{m\gamma} \right)^t.
\label{myIneq2}
\end{eqnarray}
Since $| B^k |=m$ and 
\[
\c^{\Tr} \x^k -z^* = \x^{k^{\Tr}} \s^* 
= \sum_{j \in B^k} x_j^k s_j^*,
\]
there must have a $\bar{j} \in B^k$ such that 
\[
x_{\bar{j}}^k s_{\bar{j}}^* \ge \frac{1}{m} (\c^{\Tr} \x^k -z^* ).
\]
Using Assumption 4, $\gamma \ge x_{\bar{j}}^k > 0$, we have
\begin{eqnarray}
s_{\bar{j}}^* \ge \frac{1}{mx_{\bar{j}}^k } (\c^{\Tr} \x^k -z^* )
\ge \frac{1}{m \gamma } (\c^{\Tr} \x^k -z^* ).
\label{myIneq3}
\end{eqnarray}
Moreover, for any integer $t >0$, we have
\[
\c^{\Tr} \x^{k+t} -z^* = \s^{*^{\Tr}}\x^{k+t} 
\ge x_{\bar{j}}^{k+t} s_{\bar{j}}^*,
\]
this gives
\begin{eqnarray}
x_{\bar{j}}^{k+t} \le  \frac{\c^{\Tr} \x^{k+t} -z^* }{s_{\bar{j}}^*}.
\label{myIneq4}
\end{eqnarray}
Substituting (\ref{myIneq3}) and (\ref{myIneq2}) into 
(\ref{myIneq4}) gives
\begin{eqnarray}
x_{\bar{j}}^{k+t} 
%& \le &  \frac{\c^{\Tr} \x^{k+t} -z^* }{s_{\bar{j}}^*}
%\nonumber \\
%& \le & 
\le m \gamma \frac{\c^{\Tr} \x^{k+t} -z^* }
{\c^{\Tr} \x^k -z^* }
\le m \gamma \left( 1 - \frac{\delta}{m\gamma} \right)^t.
\label{myIneq5}
\end{eqnarray}
Substituting (\ref{tIter}) into (\ref{myIneq5}) and 
using the identity $x^{\log_b y}= y^{\log_b x}$ and the
inequality $\log(1-x) \le -x$ for all $x \le  1$, we have
\begin{eqnarray}
x_{\bar{j}}^{k+t} & \le &  m \gamma \left( 1 - \frac{\delta}{m\gamma} \right)^{m \frac{\gamma}{\delta}
\log\left( m \frac{\gamma}{\delta} \right)}
\nonumber \\
& = & m \gamma \left[ \left( 1 - \frac{\delta}{m\gamma} 
\right)^{\log\left( m \frac{\gamma}{\delta} \right)}
\right]^{m \frac{\gamma}{\delta}}
\nonumber \\
& = & m \gamma \left[ 
\left( m \frac{\gamma}{\delta} \right)^{\log
\left( 1 - \frac{\delta}{m\gamma} \right)}
\right]^{m \frac{\gamma}{\delta}}
\nonumber \\
& \le & m \gamma \left[ \left( 
m \frac{\gamma}{\delta} \right)^{- \frac{\delta}{m\gamma}} 
\right]^{m \frac{\gamma}{\delta}} \le \delta.
\label{myIneq6}
\end{eqnarray}
Therefore, after at most $\lceil t \rceil$ iterations, 
$x_{\bar{j}}^{k+t}< \delta$ holds. In view of Assumption 4,
we conclude that 
$x_{\bar{j}}^{k+t}$ is not a basic variable of $B^{k+t}$
and (\ref{myIneq2}) asserts that it will not be a basic variable
thereafter.
\hfill\qed
\end{proof}
The scenario described in the theorem can occur at most
one time for each optimal non-basic variable and since there are  
$n-m$ non-basic optimal variables, we have the following theorem.

\begin{theorem}
For the double-pivot algorithm \ref{mainAlg}, it needs at most
$(n-m) \Bigl\lceil
m \frac{\gamma}{\delta}
\log\left( m \frac{\gamma}{\delta} \right)
\Bigr\rceil$ iterations to find the optimal solution of (\ref{LP}).
\label{main2}
\end{theorem}

\begin{remark}
The way of selecting $x_{\bar{j}}^{k}$ below (\ref{myIneq2})
implies that one should consider the entering variable that takes 
the longest step because this entering variable has a better
chance to replace an optimal non-basic variable.
\label{longestStep}
\end{remark}

It seems that both $\gamma$ and $\delta$ in
Theorem \ref{main2} are very difficult to obtain, 
and the significance of the upper bound
is questionable. As a matter of fact,
using the identical argument in \cite{km13a}, 
we can apply this bound to some special linear programming
problems, such as LP with a totally unimodular matrix and 
Markov Decision Problem with a fixed discount rate, and 
show that this bound can be related to only the
problem sizes $m$, $n$, and $\| \b \|_1$, therefore,
the double-pivot algorithm solves these special 
LP problems in a strongly polynomial time.

For LP whose matrix $\A$ is totally unimodular and all the 
element of $\b$ are integers, all basic feasible solutions are
integers, which means that $\delta \ge 1$. Notice that 
all elements of $\A_B^{-1}$ are $\pm 1$ or $0$, we have
$\gamma \le \| \b \|_1$. A corollary of Theorem \ref{main2}
is as follows:
\begin{corollary}
For LP whose matrix $\A$ is totally unimodular and all the 
element of $\b$ are integers, the double-pivot Algorithm 
\ref{mainAlg} needs at most
$(n-m) \Bigl\lceil m \| \b \|_1
\log\left( m \| \b \|_1 \right) \Bigr\rceil$ 
iterations to find the optimal solution of the linear programming
problem. If $\b$ is also totally unimodular, the double-pivot Algorithm 
\ref{mainAlg} needs at most
$(n-m) \Bigl\lceil m^2
\log\left( m^2 \right) \Bigr\rceil$ 
iterations to find the optimal solution of the linear programming
problem. 
\end{corollary}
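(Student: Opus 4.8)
The plan is to obtain the corollary as a direct specialization of Theorem \ref{main2}, whose bound is $(n-m)\bigl\lceil m(\gamma/\delta)\log(m\gamma/\delta)\bigr\rceil$ with $\gamma,\delta$ the uniform upper and lower bounds on the positive components of all basic feasible solutions (Assumption~4). So the whole task reduces to producing admissible values of $\gamma$ and $\delta$ in the totally unimodular setting, namely $\delta=1$ and $\gamma\le\|\b\|_1$, and then pushing these estimates through the monotone map $t\mapsto t\log t$ and the ceiling.

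First I would pin down $\delta$. For any basis $B$, the submatrix $\A_B$ is an $m\times m$ nonsingular submatrix of the totally unimodular matrix $\A$, hence $\det(\A_B)\in\{-1,1\}$; and since $\A_B^{-1}=\mathrm{adj}(\A_B)/\det(\A_B)$ with each entry of $\mathrm{adj}(\A_B)$ equal to $\pm$ an $(m-1)\times(m-1)$ minor of $\A_B$ (again a submatrix of $\A$, so with determinant in $\{-1,0,1\}$), every entry of $\A_B^{-1}$ lies in $\{-1,0,1\}$. Therefore $\x_B=\A_B^{-1}\b$ is an integer vector whenever $\b$ is integral, so every positive component of every basic feasible solution is at least $1$, and $\delta=1$ is an admissible lower bound.

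Next I would bound $\gamma$: for each component, $|(\A_B^{-1}\b)_i|\le\sum_{j=1}^m|(\A_B^{-1})_{ij}|\,|b_j|\le\sum_{j=1}^m|b_j|=\|\b\|_1$, using that the entries of $\A_B^{-1}$ have absolute value at most $1$. Hence $\gamma\le\|\b\|_1$, so $\gamma/\delta\le\|\b\|_1$. Since $m\ge1$ and $\gamma\ge\delta$, we have $m\gamma/\delta\ge1$, and $t\mapsto t\log t$ is nondecreasing on $[1,\infty)$; combining this monotonicity with monotonicity of the ceiling and substituting into Theorem \ref{main2} gives the first bound $(n-m)\bigl\lceil m\|\b\|_1\log(m\|\b\|_1)\bigr\rceil$. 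For the second statement, if in addition $\b\in\{-1,0,1\}^m$ then $\|\b\|_1\le m$, so $m\|\b\|_1\le m^2$, and the same monotonicity argument replaces $m\|\b\|_1$ by $m^2$ inside both the leading factor and the logarithm before the ceiling, yielding $(n-m)\bigl\lceil m^2\log(m^2)\bigr\rceil$.

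I do not expect a genuine obstacle here; the only two points requiring care are the standard linear-algebra fact that the adjugate of a totally unimodular matrix again has all entries in $\{-1,0,1\}$ (so that $\A_B^{-1}$ is integral with unit-bounded entries, giving simultaneously $\delta\ge1$ and $\gamma\le\|\b\|_1$), and the monotonicity step that justifies substituting the upper bound $\|\b\|_1$ for $\gamma/\delta$ in both the factor $t$ and the $\log t$ term prior to applying the ceiling. With these in place the corollary follows immediately from Theorem \ref{main2}.
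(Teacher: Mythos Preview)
Your proposal is correct and follows essentially the same route as the paper: the paper simply notes that total unimodularity forces $\A_B^{-1}$ to have entries in $\{-1,0,1\}$, whence $\delta\ge1$ and $\gamma\le\|\b\|_1$, and then invokes Theorem~\ref{main2}. Your argument supplies the adjugate justification for the entries of $\A_B^{-1}$ and the monotonicity of $t\mapsto t\log t$ more explicitly, but the substance is identical.
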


For Markov Decision Problem with a fixed discount rate, Ye 
\cite{ye11} showed (1) $\delta \ge 1$, and (2) for the 
constant discount rate $\theta <1$, 
$\gamma \le \frac{m}{1-\theta}$. Therefore, the second
corollary of Theorem \ref{main2} is as follows:
\begin{corollary}
For Markov Decision Problem with a fixed discount rate, 
the double pivot Algorithm \ref{mainAlg} needs at most
$(n-m) \Bigl\lceil \frac{m^2}{1-\theta}
\log\left( \frac{m^2}{1-\theta} \right) \Bigr\rceil$ 
iterations to find the optimal solution of the linear programming
problem.
\end{corollary}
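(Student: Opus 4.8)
The plan is to obtain this corollary simply by specializing the dimension‑free bound of Theorem~\ref{main2} to the linear–programming encoding of the $\theta$‑discounted Markov Decision Problem, using the two quantitative facts about that encoding recorded immediately above the statement, namely $\delta \ge 1$ and $\gamma \le m/(1-\theta)$, both due to Ye~\cite{ye11}. So the first step is just to recall that Theorem~\ref{main2} — whose proof is the iterated application of Theorem~\ref{thm2}, each epoch permanently freezing one of the $n-m$ optimal non‑basic variables at zero — delivers the iteration bound $(n-m)\bigl\lceil m\tfrac{\gamma}{\delta}\log(m\tfrac{\gamma}{\delta})\bigr\rceil$ for every LP meeting Assumptions~1--4 of Section~\ref{proposedAlgo}.

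The second step is an elementary substitution and monotonicity argument. From $\delta \ge 1$ we get $m\tfrac{\gamma}{\delta} \le m\gamma \le \tfrac{m^2}{1-\theta}$, and since $\gamma \ge \delta$ both $m\tfrac{\gamma}{\delta}$ and $\tfrac{m^2}{1-\theta}$ lie in $[1,\infty)$. Because $s \mapsto s\log s$ is nondecreasing on $[1,\infty)$, it follows that $m\tfrac{\gamma}{\delta}\log\!\bigl(m\tfrac{\gamma}{\delta}\bigr) \le \tfrac{m^2}{1-\theta}\log\!\bigl(\tfrac{m^2}{1-\theta}\bigr)$; applying $\lceil\cdot\rceil$ and multiplying by $n-m$ (both monotone) yields exactly the claimed bound $(n-m)\bigl\lceil \tfrac{m^2}{1-\theta}\log(\tfrac{m^2}{1-\theta})\bigr\rceil$.

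The one place that needs care — and which I would treat as the main, though still minor, obstacle — is verifying that the standard MDP‑LP actually satisfies the hypotheses under which Theorem~\ref{main2} was proved: full row rank, existence of an optimizer, availability of a non‑optimal starting basic feasible solution, and, most importantly, the uniform bounds $\delta \le x_i \le \gamma$ of Assumption~4 together with the attendant non‑degeneracy. The value $\delta \ge 1$ and the upper bound $\gamma \le m/(1-\theta)$ are precisely the content of Ye's analysis~\cite{ye11} (and of the ``identical argument in~\cite{km13a}'' referred to in the text), so no new estimate is required; one only has to match notation — what plays the role of $m$, $n$, and $\b$ for the MDP‑LP — and confirm that Assumption~4 is meaningful in that setting. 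With that in hand the corollary is immediate.
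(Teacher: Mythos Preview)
Your proposal is correct and follows essentially the same approach as the paper: the paper simply cites Ye~\cite{ye11} for the bounds $\delta \ge 1$ and $\gamma \le m/(1-\theta)$ and substitutes them directly into the iteration bound of Theorem~\ref{main2}. Your added monotonicity argument for $s \mapsto s\log s$ and the check of Assumptions~1--4 make the substitution slightly more explicit than the paper does, but the underlying idea is identical.
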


The tightness of the two bounds in Theorems 
\ref{mainThe} and \ref{main2} can be
seen from the following problem provided in \cite{km11}:
\begin{eqnarray}
\begin{array}{cl}
\min & -\sum_{i=1}^m   x_i \\
\mbox{subject to} & 
x_1 + x_{m+1} = 1, \\
& 2 \sum_{i=1}^{k-1} x_i +x_k + x_{m+k} = 2^k-1
 \hspace{0.1in} k=2, \ldots, m, 
  \\
& x_i \ge 0 \hspace{0.1in} i=1, \ldots, 2m.
\end{array}
%\label{3rdProblem}
\label{3rdStandard}
\end{eqnarray}
Assuming that the initial point is taken as 
$\x^0 =[0,\ldots,0,1,\ldots,1]$ (there are $m$ zeros
and $n-m=m$ ones)
and Dantzig's rule is used, for this problem, Kitahara 
and Mizuno showed \cite{km11} that the bound of 
Theorem \ref{main2} is reduced to $\lceil (2m \log 2)2^m \rceil$,
while the actual iteration number is $2^m-1$. The estimated
bound is reasonably tight. We show that the bound of 
Theorem \ref{mainThe} is much tighter than the one
of Theorem \ref{main2}. For this problem, it is easy
to see that the first $m$ variables of the optimal solution are
$[x_1^*,\ldots,x_m^*]=[ 0, \ldots, 0,2^m-1]$ with 
optimal objective function $-(2^m-1)$ and the objective 
function at initial $\x^0$ is zero. Therefore, we have
$\c^{\Tr}\x^0-z^*=2^m-1$.  Since $\c_{B^0}=\0$ 
and $\c_{N^0}=(-1, \ldots, -1)=\bar{\c}_{N^0}$,
this shows that $\delta_D^0 = 1$ (see (\ref{deltaD})). 
In the first iteration, noticing that
$B^0=\{ m+1, m+2,\ldots, 2m \}$ and the entering variable 
$\bar{x}_{\jmath_2^k}=x_m=2^m-1$, i.e., $\gamma_{\ell}=2^m-1$. This shows that the upper 
bound of Theorem \ref{mainThe} is reduced to 
$\bigl\lceil(2^m-1)/\gamma_{\ell}\bigr\rceil$,
i.e., it needs only one iteration to find the optimal 
solution. This claim is also verified in the numerical 
test in the next section for several variants of 
Klee-Minty cube.

\section{Numerical test}

Numerical tests for the proposed algorithm have been done
for two purposes. First, we would like to verify that the algorithm
indeed solves Klee-Minty cube problems efficiently. Second,
we would like to know if this algorithm is competitive to 
the Dantzig's pivot rule for randomly generated LP problems
as we known that Dantzig's rule is the most efficient
deterministic pivot rule for general problems \cite{ps14}.

\subsection{Test on Klee-Minty cube problems}

Klee-Minty cube and its variants have been used to prove that
several popular simplex algorithms need exponential 
number of iterations in the worst case to find an optimizer. 
In this section, three variants of Klee-Minty cube 
\cite{greenberg97,ibrahima13,km11} are used to test
the proposed algorithm.

The first variant of Klee-Minty cube is given in \cite{greenberg97}:
\begin{eqnarray}
\begin{array}{cl}
\min & -\sum_{i=1}^m 2^{m-i} x_i \\
\mbox{subject to} & 
\left[ 
\begin{array}{cccccc}
1 & 0 & 0 & \ldots & 0 & 0 \\
2^2 & 1 & 0 & \ldots & 0 & 0 \\
2^3 & 2^2  & 1 & \ldots & 0 & 0 \\
\vdots &  \vdots & \vdots &  \ddots & 0 & 0 \\
2^{m-1} &  2^{m-2} & 2^{m-3} & \ldots  & 1 & 0 \\
2^m &  2^{m-1} & 2^{m-2} & \ldots &  2^2  & 1 \\
\end{array}
\right]
\left[ \begin{array}{c}
x_1 \\ x_2 \\ \vdots \\  \vdots \\ x_{m-1} \\ x_m
\end{array} \right] 
\le 
\left[ \begin{array}{c}
5 \\ 25 \\ \vdots \\  \vdots \\ 5^{m-1} \\ 5^m
\end{array} \right] 
  \\
& x_i \ge 0 \hspace{0.1in} i=1, \ldots, m.
\end{array}
\label{1stProblem}
\end{eqnarray}
The optimizer is $[ 0, \ldots, 0,5^m ]$ with optimal 
objective function $-5^m$.

The second variant of Klee-Minty cube is given in \cite{ibrahima13}:
\begin{eqnarray}
\begin{array}{cl}
\min & -\sum_{i=1}^m 10^{m-i} x_i \\
\mbox{subject to} & 
2 \sum_{j=1}^{i-1} 10^{i-j} x_j +x_i \le 100^{i-1}
 \hspace{0.1in} i=1, \ldots, m,  \\
& x_i \ge 0 \hspace{0.1in} i=1, \ldots, m.
\end{array}
\label{2ndProblem}
\end{eqnarray}
The optimizer is $[ 0, \ldots, 0,10^{2(m-1)}]$ with optimal 
objective function $-10^{2(m-1)}$.

The third variant of Klee-Minty cube is given in \cite{km11} 
(its standard form was discussed in the previous section):
\begin{eqnarray}
\begin{array}{cl}
\min & -\sum_{i=1}^m   x_i \\
\mbox{subject to} & 
x_1 \le 1, \\
& 2 \sum_{i=1}^{k-1} x_i +x_k \le 2^k-1
 \hspace{0.1in} k=2, \ldots, m, 
  \\
& x_i \ge 0 \hspace{0.1in} i=1, \ldots, m.
\end{array}
\label{3rdProblem}
\end{eqnarray}
The optimizer is $[ 0, \ldots, 0,2^m-1]$ with optimal 
objective function $-(2^m-1)$.
%It is worthwhile to point out the following facts related to this
%problem: (a) $\| \x^* \|_1 = 2^m -1$, (b) $\delta_D \ge 1$
%(because each component of any BFS is an integer \cite{km11}), 
%and (c) $x_{\jmath_2^k} > \gamma_{\ell}$ may hold for 
%many iterations. In view of (\ref{myIneq1}), this means 
%that the upper bound in Theorem \ref{mainThe} is possibly 
%smaller than $2^m -1$ for this particular problem.

The test results are summarized in Table \ref{table1}.
All initial points are selected as $[0, \ldots, 0]^{\Tr}$
from which all popular pivot algorithms needs $2^m-1$
iterations to find the optimal solution.
For the first variant of Klee-Minty cube \cite{greenberg97}, 
using the most two negative elements of $\bar{c}_{N^k}$ 
to choose the entering variables (the strategy used in
\cite{ve18} as described in Remark 
\ref{mostNeg}) is better than the strategy of Dantzig's rule
which uses the most negative element 
of $\bar{c}_{N^k}$ to choose the entering variable. 
The pivot rule with the most two negative elements 
uses half of the iterations of Dantzig's rule but the iteration
numbers still increase exponentially fast. When the size
$m \ge 18$, the program freezes because iteration numbers
are very big and the computational time is very long.
Algorithm \ref{mainAlg} is much more impressive. For
all problems in three variants, only one iteration is needed 
to find the optimal solution, except for the problem with dimension
$m=200$ in variant 2 \cite{ibrahima13} because Matlab R2016a
on computer Dell Inspiron 3847 cannot store the big value (bigger 
than 10E+310) in vector $\b$. This verifies that the estimated
bound of Theorem \ref{mainThe} is attainable.

We also compared the tests result with the one in 
\cite{ibrahima13} which uses randomized pivot method.
For $m=100$, the randomized pivot method uses more
than $1000$ iterations to find the solution for a variant 
of Klee-Minty cube on average of $200$ runs; 
for $m=200$, the randomized pivot method uses  
more than $5000$ iterations to find the solution on 
average of $200$ runs. Using Algorithm \ref{mainAlg},
it takes one iteration for these problems. 
The proposed double-pivot algorithm is much
more efficient than the randomized algorithm
for these Klee-Minty cube problems. This result
justifies a moderate computational cost increase
in each iteration.

\begin{longtable}{|c|c|c|c|c|c|}
\hline          
\multirow{2}{*}{Problem} & \multicolumn{3}{|c|}
{Klee-Minty Variant 1 \cite{greenberg97}}  
& \multicolumn{1}{|c|}{Variant 2 \cite{ibrahima13}}   
& \multicolumn{1}{|c|} {Variant 3 \cite{km11}}  \\ \cline{2-6}
{size}  & {Dantzig} & Remark \ref{mostNeg} & Alg. 2.1 &  
Alg. 2.1  & Alg. 2.1    \\ 
\hline
2     & 3            & 2             &   1   & 1  &  1    \\ \hline
3     & 7            &  4            &   1   & 1  &  1    \\ \hline
4     &  15          & 8            &   1   & 1  &  1    \\ \hline
5     &  31          & 16          &  1    & 1 &  1     \\ \hline
6     &  63          & 32          &  1    & 1 &  1      \\ \hline
7     &  127         & 64          &  1   & 1 &  1     \\ \hline   
8     &  255         & 128        &  1   & 1 & 1      \\ \hline
9     &  511         & 256        &  1   & 1 & 1     \\ \hline
10   & 1023         & 512        &  1   & 1 & 1     \\ \hline
11 & $2^{11}-1$ & 1024       &  1   & 1 & 1      \\ \hline
12 & $2^{12}-1$ & $2^{11}$ &  1  & 1 & 1       \\ \hline
13 & $2^{13}-1$ & $2^{12}$ &  1  &  1 & 1    \\ \hline
14 & $2^{14}-1$ & $2^{13}$ &  1  & 1 & 1       \\ \hline
15 & $2^{15}-1$ & $2^{14}$ &  1  &  1 &  1        \\ \hline
16 & $2^{16}-1$ & $2^{15}$ &  1  & 1 &  1       \\ \hline
17   &  -             & $2^{16}$ &  1  & 1 &  1       \\ \hline
18   &  -             & -             &  1   & 1  &  1      \\ \hline
19   & -              & -             & 1  &  1 &  1        \\ \hline
20   &  -             & -             & 1  &  1 &  1     \\ \hline
21   &  -             & -             & 1  &  1 &  1      \\ \hline
22   &  -             & -             & 1  &  1 & 1       \\ \hline
23   &  -             & -             & 1  &  1 & 1          \\ \hline
24   & -              & -             & 1  &  1 & 1        \\ \hline
25   &  -             & -             & 1  &  1 & 1        \\ \hline
26   &  -             & -             & 1  &  1 & 1        \\ \hline
27   &  -             & -             & 1  &  1 & 1       \\ \hline
28   &  -             & -             & 1  & 1 & 1        \\ \hline
29   &  -             & -             & 1  &  1 & 1        \\ \hline
30   &  -             & -             & 1  & 1 &  1     \\ \hline
100  & -             & -           & 1 & 1 &  1       \\ \hline
200  & -             & -           & 1 & - &  1      \\ \hline
\caption{Iteration count for three Klee-Minty variants}
%\label{tableIteration}
\label{table1}
\end{longtable}

\subsection{Test on randomly generated problems}

We also tested and compared Algorithm~\ref{mainAlg}
and Dantzig's pivot algorithm using randomly generated
problems. Some details of the implementation of 
Algorithm~\ref{mainAlg} are provided here for readers 
who are interested in repeating the test. 

Note that the burden of the algorithm is to repeatedly
solve the two dimensional linear programming problem
(\ref{twoDim}). To reduce the computational cost, we 
partitioned  
\[
\bar{\A}_{(\jmath_1,\jmath_2)}=
\left[ \begin{array}{c} \bar{\A}_1 \\ \bar{\A}_2
\end{array} \right], \hspace{0.1in}
\bar{\b}=\left[ \begin{array}{c} 
\bar{\b}_1 \\ \bar{\b}_2
\end{array} \right], 
\]
and showed in Section \ref{proposedAlgo} that we 
only need to consider a subset of the constraints
$\bar{\A}_1 \x_2 \le \bar{\b}_1$ in (\ref{twoDim}).
For large problems, there are still many redundant 
constraints which can easily be removed. For any
constraint in $\bar{\A}_1 \x_2 \le \bar{\b}_1$,
since $\bar{b}_i > 0$, these constraints 
can be rewritten as $A_{i1} x_1 +A_{i2} x_2 \le 1$
by dividing each row by $\bar{b}_i$. 
Therefore, we can further divide these constraints 
into five categories so that we can use the following
heuristics to remove more redundant constraints. 
We use Matlab notations which make it easy to describe
the process.

{\it Category 1: $A_{i1}>0$ and $A_{i2}>0$}

For constraints in this category, we remove the
redundant constraints as follows (see Figure \ref{case1}):

%For constraints in this category, the non-redundant
%constraints are selected as follows (see Figure \ref{case1}):
\begin{figure}[htb]
\centerline{\includegraphics[height=6cm,width=6.5cm]
{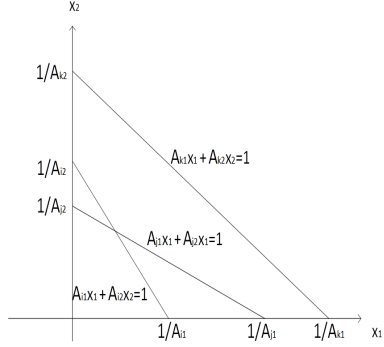}}
\caption{Constraints in Category 1}
\label{case1}
\end{figure}

\begin{itemize}
\item[] Let $L_1$ be the number of constraints in this
category. We find the smallest intercept in x-axis 
$[ix,{i}]=\min_i \left\{ \frac{1}{A_{i1}} \right\}$
and the smallest intercept in y-axis 
$[iy,{j}]=\min_i \left\{ \frac{1}{A_{i2}} \right\}$.
\item[] If ${i}={j}$, all constraints except 
${i}$th constraint are redundant. Denote the candidate 
non-redundant constraint set $C_1= \{ {i} \}$.
\item[] If ${i} \neq {j}$, solving the linear system 
composed of ${i}$th and ${j}$th equations gives
$(x_1, x_2)$. Set the candidate non-redundant constraint 
set $C_1=\{ {i},{j} \}$.
\item[] For $k=1:L_1$
\begin{itemize}
\item[] If $A_{k1}x_1+A_{k2}x_2>1$, add index $k$ into
candidate non-redundant constraint set $C_1$. Otherwise, the $k$th
constraint is redundant.
\end{itemize}
\item[] End (For)
\end{itemize}

%\begin{itemize}
%\item[] Sort $\{ \frac{1}{A_{i1}} \}$ in ascending 
%order to get $S_1= \{\frac{1}{a_{i1}} \}$. Let 
%$\{\frac{1}{a_{i2}} \}$ be $\{ \frac{1}{A_{i2}} \}$
%re-arranged in the same order of $S_1$.
%\item[] Initialize the non-redundant constraint
%index set $C_1 = \emptyset$. Let $L$ be the number
%of members in $S_1$.
%\item[] For $i=1:L$
%\begin{itemize}
%\item[] if $i=1$, constraint $a_{i1}x_1 + a_{i2}x_2 \le 1$
%is non-redundant, add index $i$ into $C_1$
%\item[] elseif $\{\frac{1}{a_{i2}} \} < 
%\min_{j \in C_1} \frac{1}{a_{j2}} $
%\begin{itemize}
%\item[] constraint $a_{i1}x_1 + a_{i2}x_2 \le 1$
%may be non-redundant, add index $i$ into $C_1$
%\end{itemize}
%\item[] end (if)
%\end{itemize}
%\item[] end (for)
%\end{itemize}

{\it Category 2: $A_{i1}>0$ and $A_{i2}<0$}

For constraints in this category, the non-redundant
constraints are selected as follows (see Figure \ref{case2}):
\begin{figure}[htb]
\centerline{\includegraphics[height=6cm,width=6.5cm]
{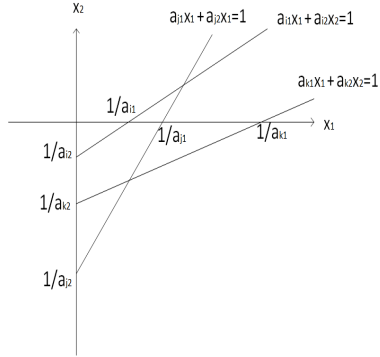}}
\caption{Constraints in Category 2}
\label{case2}
\end{figure}

\begin{itemize}
\item[] Let $L_2$ be the number of constraints in this
category.
\item[] If $L_2=1$ and the only constraint in this category has index
$i$, set the candidate non-redundant constraint set $C_2= \{ i \}$
\item[] Else if $L_2>1$
\begin{itemize}
\item[] Sort $\{ \frac{1}{A_{i1}} \}$ in ascending 
order to get $\s_1= \{ \frac{1}{a_{i1}} \}$. Let 
$\s_2=\{  \lvert \frac{a_{i1}}{a_{i2}} \rvert  \}$ be 
obtained by re-arranging 
$\{  \lvert \frac{A_{i1}}{A_{i2}} \rvert \}$
in the same order as $\s_1$.
Denote $\S= [\s_1, \s_2]$ and set $j=1$.
Let $L$ be the number of rows of $\S$ and initial 
candidate non-redundant constraint set $C_2$ include
the indexes of all rows in $\S$.
\item[] While $j<L$
\begin{itemize}
\item[] Remove all rows in $\S$ that meet the condition 
$\S(i,2)<\S(j,2)$ and all corresponding indexes $i$ from $C_2$.
\item[] Let  $L$ be the number of rows of the reduced matrix $\S$
and set $j=j+1$.
\end{itemize}
\item[] End (While)
\end{itemize}
\item[] End (If)
\end{itemize}
%\begin{itemize}
%\item[] Sort $\{ \frac{1}{A_{i1}} \}$ in ascending 
%order to get $S_2= \{ \frac{1}{a_{i1}} \}$. Let 
%$\{  \lvert \frac{a_{i1}}{a_{i2}} \rvert  \}$ be 
%$\{  \lvert \frac{A_{i1}}{A_{i2}} \rvert \}$
%re-arranged in the same order of $S_2$.
%\item[] Initialize the non-redundant constraint
%index set $C_2 = \emptyset$. Let $L$ be the number
%of members in $S_2$.
%\item[] For $i=1:L$
%\begin{itemize}
%\item[] if $i=1$, constraint $a_{i1}x_1 + a_{i2}x_2 \le 1$
%is non-redundant, add index $i$ into $C_2$
%\item[] elseif $ \lvert \frac{a_{i1}}{a_{i2}}\rvert < 
%\min_{j \in C_2} \lvert \frac{a_{j1}}{a_{j2}} \rvert$
%\begin{itemize}
%\item[] constraint $a_{i1}x_1 + a_{i2}x_2 \le 1$
%may be non-redundant, add index $i$ into $C_2$
%\end{itemize}
%\item[] end (if)
%\end{itemize}
%\item[] end (for)
%\end{itemize}

{\it Category 3: $A_{i1}<0$ and $A_{i2}>0$}

For constraints in this category, the non-redundant
constraints are selected as follows (see Figure \ref{case3}):
\begin{figure}[htb]
\centerline{\includegraphics[height=6cm,width=6.5cm]
{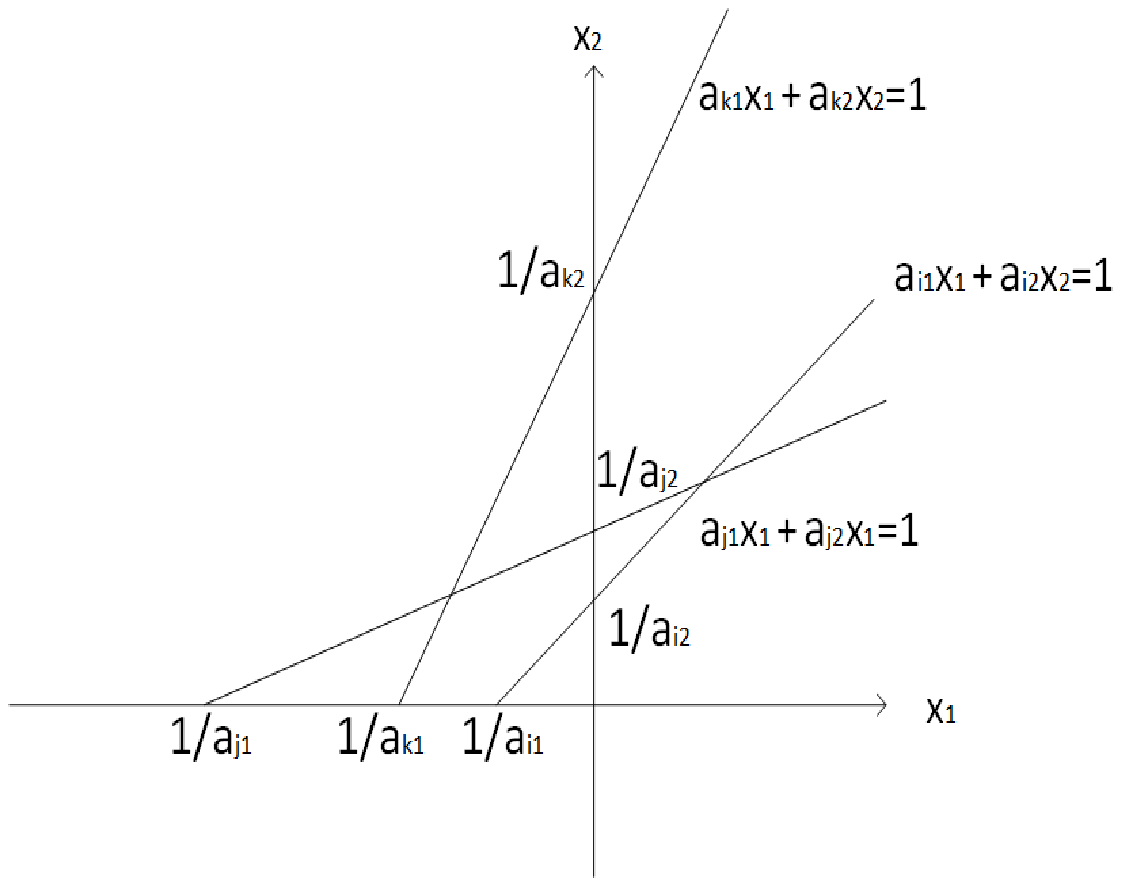}}
\caption{Constraints in Category 3}
\label{case3}
\end{figure}

\begin{itemize}
\item[] Let $L_3$ be the number of constraints in this
category.
\item[] If $L_3=1$ and the only constraint in this category has index
$i$, set the candidate non-redundant constraint set $C_3= \{ i \}$
\item[] Else if $L_3>1$
\begin{itemize}
\item[] Sort $\{ \frac{1}{A_{i2}} \}$ in ascending 
order to get $\s_1= \{ \frac{1}{a_{i2}} \}$. Let 
$\s_2=\{  \lvert \frac{a_{i2}}{a_{i1}} \rvert  \}$ be
obtained by re-arranging 
$\{  \lvert \frac{A_{i2}}{A_{i1}} \rvert \}$
in the same order as $\s_1$.
Denote $\S= [\s_1, \s_2]$ and set $j=1$.
Let $L$ be the number of rows of $\S$ and 
initial candidate non-redundant constraint
set $C_3$ include the indexes of all rows in $\S$.
\item[] While $j<L$
\begin{itemize}
\item[] Remove all rows in $\S$ that meet the condition 
$\S(i,2)>\S(j,2)$ and all corresponding indexes $i$ from $C_3$.
\item[] Let  $L$ be the number of rows of the reduced matrix $\S$
and set $j=j+1$.
\end{itemize}
\item[] End (While)
\end{itemize}
\item[] End (If)
\end{itemize}
%\begin{itemize}
%\item[] Sort $\{ \frac{1}{A_{i2}} \}$ in ascending 
%order to get $S_3= \{ \frac{1}{a_{i2}} \}$. Let 
%$\{ \lvert \frac{a_{i2}}{a_{i1}} \rvert \}$ be 
%$\{  \lvert \frac{A_{i2}}{A_{i1}} \rvert \}$
%re-arranged in the same order of $S_3$.
%\item[] Initialize the non-redundant constraint
%index set $C_3 = \emptyset$. Let $L$ be the number
%of members in $S_3$.
%\item[] For $i=1:L$
%\begin{itemize}
%\item[] if $i=1$, constraint $a_{i1}x_1 + a_{i2}x_2 \le 1$
%is non-redundant, add index $i$ into $C_3$
%\item[] elseif $ \lvert \frac{a_{i2}}{a_{i1}}\rvert < 
%\min_{j \in C_3} \lvert \frac{a_{j2}}{a_{j1}} \rvert$
%\begin{itemize}
%\item[] constraint $a_{i1}x_1 + a_{i2}x_2 \le 1$
%may be non-redundant, add index $i$ into $C_3$
%\end{itemize}
%\item[] end (if)
%\end{itemize}
%\item[] end (for)
%\end{itemize}

{\it Category 4: $A_{i1}>0$ and $A_{i2}=0$}

For constraints in this category, we remove all
constraints except the $\imath$th constraints satisfying
$\frac{1}{A_{\imath 1}} =  \min_i \frac{1}{A_{i1}}$.

{\it Category 5: $A_{i1}=0$ and $A_{i2}>0$}

For constraints in this category, we remove all
constraints except the $\imath$th constraints satisfying
$\frac{1}{A_{\imath 2}} = \min_i \frac{1}{A_{i2}}$.

After removing the redundant constraints as described
as above, the number of rows in $\bar{\A}_1$ 
will be significantly reduced, hence
the number of equations expressed in the form
of (\ref{subEq}) will be significantly reduced.

Both Algorithm \ref{mainAlg} and Dantzig's pivot algorithm
are implemented in Matlab. Numerical test is carried 
out for randomly generated problems
which are obtained as follows: first, given the problem
size $m$, a matrix $\M$ with random entries of dimension
$m \times m$ and an identity matrix of dimension $m$
are generated. $\A=[\M~~~ \I]$ is determined whose
initial basic solution is composed of
the last $m$ columns. Then a positive vector $\b$
with random entries of dimension $m$ and a vector 
$\c=(\c_1, \0)$ with $\c_1$ a random vector of 
dimension $m$ are generated. For each of these 
LP problems, Dantzig's pivot algorithm and the 
double-pivot algorithm are used to solve the LP problem.
For each given problem size $m$, this test is repeated for 
$100$ randomly generated problems. The average iteration
number and average computational time in seconds are 
obtained. The test results are presented in Table \ref{table2},
It is easy to see that for all problems with different size, 
the double-pivot algorithm uses few iterations 
on average than Dantzig's pivot algorithm. For small size 
problems, Dantzig's pivot algorithm uses significant less 
CPU time than the double-pivot algorithm. As the problem
size increases, the double-pivot algorithm becomes more and 
more competitive to Dantzig's pivot algorithm. For $m=1000$,
the CPU times used by the two algorithms are very close.
For $m=2000$, it takes hours of the CPU times for either 
algorithm to solve a randomly generated dense LP 
problem\footnote{For $m=n/2$, the matrix $\M$ has 
$\frac{n^2}{4}$ non-zeros, but for most Netlib 
problems, there are only $\mathcal{O}(n)$ non-zeros. 
Therefore, it is not a surprise that solving the randomly 
generated {\bf dense} LP problems is time-consuming.}. 
Therefore, the test stops for problems 
with $m=1000$. According to the trends of the CPU 
times (and iteration numbers) used by the two 
algorithms for different problem sizes, we guess that 
for problems with size $m=10000$ and larger, the 
double-pivot algorithm will be more efficient than 
Dantzig's pivot algorithm. 

%It is worthwhile to note that the implementation described in 
%this section may not be optimal. Also it is clear that the 
%double-pivot algorithm is suitable for parallel implementation.
%But these potential improvements are not the focus of
%this paper.

\begin{longtable}{|c|c|c|c|c|}
\hline          
\multirow{2}{*}{Problem} & \multicolumn{2}{|c|}
{Dantzig pivot rule}  
& \multicolumn{2}{|c|}{double pivot rule}   
\\ \cline{2-5}
{size m}  & iteration & CPU time (s) & iteration & CPU time (s)  \\ 
\hline
10      & 6.3800  & 0.0007   &   4.2200   &  0.0110    \\ \hline
100    & 160.03  & 0.0579   &   155.76   &  0.2537    \\ \hline
1000    & 17683  & 1641.1   &   7512  &  3097.7    \\ \hline
\caption{Comparison test for Dantzig pivot and double pivot rules}
%\label{tableIteration}
\label{table2}
\end{longtable}

%\section{A comment on iteration upper bound}
%
%For the third variant of Klee-Minty cube (\ref{3rdProblem}), it is easy
%to verify the following claims:
%\begin{itemize}
%\item[1.] $\| \x^* \|_1 = 2^m -1$.
%\item[2.] $\gamma_D^0 = 1$.
%\item[3.] each component of any BFS is an integer \cite{km11}.
%\end{itemize}
%The first claim is easy to see from $\x_{B^*}=[0,\ldots,0,2^m-1]^{\Tr}$.
%The second claim follows from (\ref{gammaD}) and
%$\gamma_D^0 =\max_{j} \{ -{c}_{j} ~|~ {c}_{j} < 0 \}=1$.
%The last claim implies (a) each component of $\bar{\c}_{N^k}$ 
%is an integer, therefore, $\delta_D \ge 1$. and (b) $\gamma_{\ell} \ge 1$.
%Substituting this numbers into Theorem \ref{mainThe} shows that
%from any basic feasible solutions, it takes at most
%\[
%\Bigl\lceil
%\frac{\gamma_D^0\| \x^* \|_1 }
%{\delta_D \gamma_{\ell}}
%\Bigr\rceil \le 2^m-1
%\]
%iterations for Algorithm \ref{mainAlg} to find the optimal
%solution for the variant of Klee-Minty cube (\ref{3rdProblem}). 
%Notice that for most iteration $k$, it is very likely that
%$x_{\jmath_2^k} > \gamma_{\ell}$ in (\ref{myIneq1}),therefore,
%we conjecture that Algorithm \ref{mainAlg} may solve LP
%problems in polynomial time.

\section{Conclusion}

In this paper, a double-pivot simplex method is proposed.
Two upper bounds of the iteration numbers for the proposed
algorithm are derived. The first bound is very tight and 
attainable. The second bound, when it is applied to some special 
linear programming problems, such as LP with a totally 
unimodular matrix and Markov Decision Problem with a fixed 
discount rate, shows that the double pivot algorithm will
find the optimal solution in a strongly polynomial time.
The numerical test shows very
promising result. It is hoped that the double-pivot strategy
may lead to some strongly polynomial algorithms for general
linear programming problems.

%The Matlab codes used for the tests of the Klee-Minty 
%variants in \cite{greenberg97,ibrahima13,km11} are 
%available up request to the author.

\section{Acknowledgment}

This author would like to thank Dr. F. Vitor for sharing
his recent paper which is useful in comparing the proposed
work and his excellent work.

\section{Conflict of interest}

On behalf of all authors, the corresponding author states that 
there is no conflict of interest.

\end{document}